\documentclass[11pt]{article}
\usepackage{epsfig}
\usepackage{graphicx}
\usepackage{setspace}
%\doublespacing
\usepackage{color}
\usepackage{amsmath,amsthm,amssymb}
\usepackage{latexsym}
\newtheorem{theor}{Theorem}[section]
\newtheorem{theo}[theor]{Theorem}

\newtheorem{lemma}[theor]{Lemma}

\usepackage{hhline}

\setlength{\textheight} {8.8 in} \setlength{\textwidth} {5.8 in}
\voffset -1 in \hoffset -0.5 in \topmargin .8 in
\setlength{\evensidemargin} {0.6 in} \setlength{\oddsidemargin}{0.6
in} \setlength {\columnsep}{6 mm} \baselineskip 8 mm

\begin{document}

\title{Cubicity, Boxicity, and Vertex Cover}

\author{L. Sunil Chandran \thanks{Computer Science and Automation department,
Indian Institute of Science,
Bangalore- 560012,
India.  {\tt sunil@csa.iisc.ernet.in}} \and Anita Das
 \thanks{Computer Science and 
Automation department,
Indian Institute of Science,
Bangalore- 560012,
India.  {\tt anita@csa.iisc.ernet.in}}
\and Chintan Shah  
 \thanks{Computer Science and 
Automation department,
Indian Institute of Science,
Bangalore- 560012,
India.  {\tt chintan@csa.iisc.ernet.in}}
}

\date{}
\maketitle

\begin{abstract}
A $k$-dimensional box is the cartesian product $R_1 \times R_2
\times \cdots \times R_k$ where each $R_i$ is a closed interval on
the real line. The {\it boxicity} of a graph $G$, denoted as
$box(G)$, is the minimum integer $k$ such that $G$ is the
intersection graph of a collection of $k$-dimensional boxes. A unit
cube in $k$-dimensional space or a $k$-cube is defined as the
cartesian product $R_1 \times R_2 \times \cdots \times R_k$ where
each $R_i$ is a closed interval on the real line of the form $[a_i,
a_{i}+1]$. The {\it cubicity} of $G$, denoted as $cub(G)$, is the
minimum $k$ such that $G$ is the intersection graph of a collection
of $k$-cubes. In this paper we show that $cub(G) \leq t + \left
\lceil \log \ (n - t)\right\rceil - 1$ and $box(G) \leq \left
\lfloor\frac{t}{2}\right\rfloor + 1$, where $t$ is the cardinality
of the minimum vertex cover of $G$ and $n$ is the number of vertices
of $G$. We also show the tightness of these upper bounds.

F. S. Roberts in his pioneering paper on boxicity and cubicity had
shown that for a graph $G$, $box(G) \leq \left \lfloor\frac{n}{2}
\right \rfloor$, where $n$ is the number of vertices of $G$, and
this bound is tight. We show that if $G$ is a bipartite graph then
$box(G) \leq \left \lceil\frac{n}{4} \right\rceil$ and this bound is
tight. We point out that there exist graphs of very high boxicity
but with very low chromatic number. For example there exist
bipartite (i.e., 2 colorable) graphs with boxicity equal to
$\frac{n}{4}$. Interestingly, if boxicity is very close to
$\frac{n}{2}$, then chromatic number also has to be very high. In
particular, we show that if $box(G) = \frac{n}{2} - s$, $s \geq 0$,
then $\chi(G) \geq
\frac{n}{2s+2}$, where $\chi(G)$ is the chromatic number of $G$.\\

\noindent {\bf Key words:} Boxicity, cubicity, vertex cover.
\end{abstract}

%For a bipartite graph $G = (V_1\cup V_2, E)$, we show that $box(G)
%\leq$ min$\{ \lceil\frac{n_1}{2}\rceil,
%\lceil\frac{n_2}{2}\rceil\}$, where $n_1 = |V_1|$ and $n_2 = |V_2|$.
%We also show that this upper bound for bipartite graphs is tight.
%Finally, we compared the parameters chromatic number and boxicity of
%a graph. We show that when boxicity of a graph is very high (near to
%$\frac{n}{2}$), then chromatic number also increases highly (near to
%$\frac{n}{2}$).

%The best known upper bound for cubicity of a graph $G$ is known to
%be $\frac{2n}{3}$. In this paper, we have shown that $cub(G) \leq
%|vc| - 1 + log(n - \alpha(G))$, where $\alpha(G)$ is the
%independence number of $G$ and also have shown the tightness of this
%bound.

\section{Introduction}

Let $\mathcal{F}$ be a family of non-empty sets. An undirected graph
$G$ is an intersection graph for $\mathcal{F}$ if there exists a
one-one correspondence between the vertices of $G$ and the sets in
$\mathcal{F}$ such that two vertices in $G$ are adjacent if and only
if the corresponding sets have non-empty intersection. If
$\mathcal{F}$ is a family of intervals on real line, then $G$ is
called an {\it interval graph}. If $\mathcal{F}$ is a family of
intervals on real line such that all the intervals are of equal
length, then $G$ is called a {\it unit interval graph}.

A $k$-dimensional box or $k$-box is the cartesian product $R_1
\times R_2 \times \cdots \times R_k$, where each $R_i$ is a closed
interval on the real line. The boxicity of a graph $G$ is defined to
be the minimum integer $k$ such that $G$ is the intersection graph
of a collection of $k$-boxes. Since $1$-boxes are nothing but closed
intervals on the real line, interval graphs are the graphs having
boxicity $1$.

A unit cube in $k$-dimensional space or a $k$-cube is defined as the
cartesian product $R_1 \times R_2 \times \cdots \times R_k$ where
each $R_i$ is a closed interval on the real line of the form $[a_i,
a_{i}+1]$. A $k$-cube representation of a graph is a mapping of the
vertices of $G$ to $k$-cubes such that two vertices in $G$ are
adjacent if and only if their corresponding $k$-cubes have a
non-empty intersection. The {\it cubicity} of $G$ is the minimum $k$
such that $G$ has a $k$-cube representation. Note that a $k$-cube
representation of $G$ using cubes with unit side length is
equivalent to a $k$-cube representation where the cubes have side
length $c$ for some fixed positive number $c$. The graphs of
cubicity $1$ are exactly the class of unit interval graphs. Clearly
$box(G) \leq cub(G)$.

The concept of boxicity and cubicity was introduced by F. S. Roberts
\cite{roberts69} in 1969. Boxicity finds applications in fields such
as ecology and operations research. Computing the boxicity of a
graph was shown to be NP-hard by Cozzens \cite{cozzens81}. This was
later strengthened by Yannakakis \cite{yannakakis}, and finally by
Kratochvil \cite{kratochvil94} who showed that deciding whether
boxicity of a graph is at most two itself is NP-complete. It has
been shown that deciding whether the cubicity of a given graph is at
least three is NP-hard \cite{yannakakis}.

%The complexity of finding the maximum independent set in bounded
%boxicity graphs was considered in \cite{takao, fowler}.

Recently many new upper bounds have been derived for boxicity. In
\cite{sunil}, it is shown that $box(G) \leq 2 \Delta^2$, where
$\Delta$ is the maximum degree of the graph $G$. It is shown in
\cite{boxicitytreewidth} that $box(G) \leq tw(G) + 2$, where $tw(G)$
is the treewidth of $G$. In \cite{sunil2}, it is shown that $box(G)
\leq (\Delta + 2) \log \ n$, where $n$ is the number of vertices of
the graph $G$.

There have been many attempts to bound the boxicity of graph classes
with special structure. F. S. Roberts \cite{roberts69} proved that
the boxicity of a complete $k$-partite graph is $k$. Scheinerman
\cite{Scheinerman} showed that boxicity of outer planar graphs is at
most two. Thomassen \cite{thomassen} proved that the boxicity of
planar graphs is bounded above by three. The boxicity of split
graphs is investigated by Cozzens and Roberts \cite{cozzenrobert}.
Upper bounds on the boxicity of some special classes of graphs such
as chordal graphs, circular arc graphs, AT-free graphs, permutation
graphs, co-comparability graphs are given in
\cite{boxicitytreewidth}. The cube representation of special classes
of graphs like hypercubes and complete multipartite graphs were
investigated in \cite{cub1, cub2, maehara, quint, roberts69}.

\subsection{Our results}

A {\it vertex cover} of $G$ is a set $Q \subseteq V(G)$ that
contains at least one endpoint of every edge of $G$. Among all
vertex covers of $G$, the minimum cardinality vertex cover is called
a {\it minimum vertex cover} of $G$ and is denoted by $MVC$. A set
$A \subseteq V$ is called an {\it independent set} if the vertices
in $A$ are pairwise non-adjacent. Vertex cover is a central
parameter in graph theory and computer science. In fact it is one of
the earliest parameters to be studied in graph theory: K\"onig's
Theorem (1931) states that in a bipartite graph the cardinality of a
maximum matching is equal to the cardinality of a minimum vertex
cover. The vertex cover problem was one of Karp's 21 NP-complete
problems. It is easy to see that if $MVC$ is a minimum vertex cover
of $G$ then $V - MVC$ is a maximum independent set of $G$.

In this paper we relate the concept of vertex cover with boxicity
and cubicity. In particular we show the following:\\

\noindent{\bf Result 1.} $cub(G) \leq t + \left\lceil\log \ (n -
t)\right\rceil - 1$, where $t$ is the cardinality of a minimum
vertex cover of $G$, and this
upper bound is tight.\\

\noindent{\bf Result 2.} $box(G) \leq
\left\lfloor\frac{t}{2}\right\rfloor + 1$, where $t$ is the
cardinality of a minimum vertex cover of $G$,
and this upper bound is tight.\\

\noindent{\bf Remark 1:} It was shown in \cite{boxicitytreewidth}
that $box(G) \leq tw(G) + 2$, where $tw(G)$ is the treewidth of the
graph $G$. It can be shown that $tw(G) \le t$, where $t$ is the
cardinality of a minimum vertex cover of $G$. From this we can 
infer that $box(G) \le t+2$.  But the inequality $tw(G) \leq
t$ is tight (for example Roberts Graphs, or complete graphs). Moreover, the
inequality $box(G) \leq tw(G) + 2$ is shown to be tight up to an
additive lower order factor \cite{boxicitytreewidth}. Therefore, it
is not possible (by strengthening this approach) to get an upper bound for 
boxicity in terms of $t$
comparable to what is shown in
this paper.\\

\noindent{\bf Remark 2:} Let $\mathcal{M}_{G}$ denote the set of all
maximal matchings of $G$. Let $\nu(G) = \min_{M \in
\displaystyle\mathcal{M}_{G}}|M|$, i.e., the cardinality of the
minimum maximal matching in $G$. It was shown in \cite{cozzenrobert}
that $box(G) \leq t'(\overline{G})$, where $\overline{G}$ is the
complement of $G$ and $t'(\overline{G})$ is the minimum number of
edges of $\overline{G}$ which are incident to all the edges of
$\overline{G}$. It is easy to verify that 
$t'(\overline{G}) = \nu(\overline{G})$. Also, as $t \leq
2\nu(G)$, by
Result 2 it follows that, $box(G) \leq \nu(G)+1$. So, by combining
Result 2 and the result due to Cozzens {\sl et al.}
\cite{cozzenrobert}, we infer
that, $box(G) \leq \min\{\nu(G)+1, \ \nu(\overline{G})\}$.\\

\noindent{\bf Result 3.} For a bipartite graph $G = (V_1 \cup V_2,
E)$, $box(G) \leq \min\{\lceil\frac{n_1}{2}\rceil,
\lceil\frac{n_2}{2}\rceil\}$, where $n_1 = |V_1|$ and $n_2 =
|V_2|$. This upper bound is tight.\\

\noindent {\bf Remark 3:} The above upper bound for bipartite graphs should be 
 compared with the upper bound for general graphs given by F. S. Roberts in his
 pioneering paper \cite {roberts69}, namely $box(G) \le \left \lfloor \frac {n}{2} \right \rfloor$
 where $n$ is the number of vertices in $G$.  \\

\noindent{\bf Result 4.} If $box(G) = \frac{n}{2} - s$, then
$\chi(G) \geq \frac{n}{2s+2}$. (Recall that for a graph $G$ with $n$
vertices $box(G) \leq \frac{n}{2}$.) \\

\noindent {\bf Remark 4:} It should be noted that in general $\chi(G)$ does not seem to have
  much relation with $box(G)$.  There are graphs of very high chromatic number but with very
 low boxicity, for example the complete graphs. Also, there exist graphs of very high boxicity
 but with very low chromatic number, see Section \ref {chromatic_section} for an example. The above Theorem states
 that if the boxicity is \emph {very close} to its maximum achievable value, then the chromatic number
 also has to be \emph {high.}  It may be of interest to the reader to know that recently
 Chandran et. al. \cite {sunil} have shown that for any graph $G$, $box(G) \le 2\chi(G^2)$, where 
 $G^2$ is the square of the graph $G$ i.e.,  the graph obtained by adding edges of the
 form $(u,v)$ to $G$ where $u$ and $v$ were at a distance of exactly $2$ in $G$.

\section{Preliminaries}

Let $G$ be a simple, finite, undirected graph on $n$ vertices. The
vertex set of $G$ is denoted as $V(G)$ and the edge set of $G$ is
denoted as $E(G)$. Let $G'$ be a graph such that $V(G') = V(G)$.
Then, $G'$ is a {\it super graph} of $G$ if $E(G) \subseteq E(G')$.
We define the {\it intersection} of two graphs as follows: if $G_1$
and $G_2$ are two graphs such that $V(G_1) = V(G_2)$, then the
intersection of $G_1$ and $G_2$ denoted as $G = G_1 \cap G_2$ is a
graph with $V(G) = V(G_1) = V(G_2)$ and $E(G) = E(G_1) \cap E(G_2)$.

A set $S \subseteq V(G)$ is called a {\it clique} if $G[S]$, the
induced subgraph of $G$ on $S$, is a complete subgraph of $G$. For a
graph $G$, let $N_G(v)= \{w \in V(G) | vw \in E(G)\}$ be the set of
neighbors of $v$. A cycle on $n$ vertices is denoted as $C_n$. Let
$G$ be a graph. Let $I_1, I_2, \ldots, I_k$ be $k$ interval graphs
(unit interval graphs) such that $G = I_1 \cap I_2 \cap \cdots \cap
I_k$, then $I_1, I_2, \ldots, I_k$ is called an {\it interval graph
representation} ({\it unit interval graph representation}) of $G$.
The following equivalence is well known.

\begin{theo} [\cite{roberts69}] \label{1}
The minimum $k$ such that there exists an interval graph
representation $($unit interval graph representation$)$ of $G$ using
$k$ interval graphs $($unit interval graphs$)$ $I_1, I_2, \ldots,
I_k$ is the same as $box(G)$ $(cub(G))$.
\end{theo}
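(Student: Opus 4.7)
The plan is to establish the equivalence by exhibiting a natural bijection between $k$-box representations and $k$-tuples of interval graphs whose intersection is $G$ (and likewise for $k$-cubes and unit interval graphs). The key observation is that a $k$-box is a product of $k$ closed intervals, so it is completely determined by its projections onto the $k$ coordinate axes, and two axis-parallel boxes intersect if and only if their projections intersect in every coordinate.

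First I would prove $box(G) \ge \min k$. Suppose $G$ has a $k$-box representation, assigning to each vertex $v$ a box $B(v) = R_1(v) \times \cdots \times R_k(v)$. For each coordinate $i \in \{1,\dots,k\}$, consider the family of intervals $\{R_i(v) : v \in V(G)\}$ and let $I_i$ be its intersection graph. Each $I_i$ is by definition an interval graph on $V(G)$. I would then verify the identity $G = I_1 \cap I_2 \cap \cdots \cap I_k$: for any two vertices $u,v$, $uv \in E(G)$ iff $B(u) \cap B(v) \neq \emptyset$ iff $R_i(u) \cap R_i(v) \neq \emptyset$ for every $i$ iff $uv \in E(I_i)$ for every $i$.

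For the reverse direction $box(G) \le \min k$, suppose $G = I_1 \cap \cdots \cap I_k$ where each $I_i$ is an interval graph. Fix, for each $i$, an interval representation of $I_i$ assigning to every vertex $v$ an interval $R_i(v)$ on the real line. Define $B(v) = R_1(v) \times \cdots \times R_k(v)$, a $k$-box. The same chain of equivalences as above shows $B(u) \cap B(v) \neq \emptyset$ iff $uv \in E(G)$, so $\{B(v)\}_{v \in V(G)}$ is a $k$-box representation of $G$. Combining the two inequalities yields the boxicity statement.

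The cubicity statement is proved by exactly the same argument, with the only change being that all intervals used are of unit length: projections of unit cubes are unit intervals, and a product of unit intervals is a unit cube. Thus the bijection restricts to $k$-cube representations on one side and $k$-tuples of unit interval graphs on the other. I do not anticipate any genuine obstacle here; the main care needed is simply to state clearly that box (or cube) intersection is coordinate-wise, which is the single fact that powers both directions.
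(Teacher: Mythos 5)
Your proof is correct and is exactly the standard coordinate-projection argument underlying Roberts' equivalence; the paper itself states this theorem as a known result from \cite{roberts69} and gives no proof of its own, so there is nothing to diverge from. The only point worth making explicit in the cubicity case is that a unit interval representation may use some common length $c\neq 1$, but rescaling each coordinate independently reduces this to side length $1$, as the paper notes.
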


A graph $G$ is called {\it chordal} if $G$ does not have $C_n$, $n
\geq 4$, as an induced subgraph. Split graphs form a special
subclass of chordal graphs. A graph $G$ is called a {\it split
graph} if $G$ and $\overline{G}$ both are chordal, where
$\overline{G}$ is the complement of the graph $G$. The following
characterization of split graphs is due to F\"oldes {\sl et al.}

\begin{theo}[\cite{hammer}]
$G$ is a split graph if and only if there exists a partition $V = S
\cup K$ of $V(G)$ into an independent set $S$ and a clique $K$.
\end{theo}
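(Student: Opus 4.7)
For the forward direction, I would assume $V(G) = S \cup K$ with $S$ independent and $K$ a clique, and show that every induced cycle $v_1 v_2 \cdots v_m v_1$ in $G$ has $m \leq 3$. The key combinatorial observation is that $C$ contains at most two vertices from $K$ (three would be pairwise adjacent in $G$ and hence produce a chord for $m \geq 4$), and if two vertices of $C$ lie in $K$ they must be consecutive on $C$. Hence at least $m - 2 \geq 2$ consecutive vertices of $C$ lie in the independent set $S$, an immediate contradiction. Applying the same argument to $\overline{G}$, where $S$ becomes a clique and $K$ an independent set, shows that $\overline{G}$ is chordal as well.

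For the backward direction, I would first translate the hypothesis into a forbidden-subgraph condition. Chordality of $G$ rules out induced $C_4$ and $C_5$ in $G$, while chordality of $\overline{G}$ rules out induced $C_4$ and $C_5$ in $\overline{G}$; since $\overline{C_4} = 2K_2$ and $\overline{C_5} = C_5$, the graph $G$ is $\{2K_2, C_4, C_5\}$-free. Now I would pick $K$ to be a maximum clique of $G$ and set $S = V \setminus K$, aiming to prove $S$ is independent.

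Suppose for contradiction some edge $xy$ lies in $S$. By maximality of $K$, there exist $a, b \in K$ with $xa \notin E$ and $yb \notin E$. The strategy is to examine the induced subgraph on $\{a, b, x, y\}$, together with a carefully chosen fifth vertex of $K$ when necessary, and in every case to exhibit one of the forbidden configurations $2K_2$, $C_4$, or $C_5$. When $a \neq b$, the four vertices $\{a, b, x, y\}$ together with the edges/non-edges among them and the adjacencies $xb$ and $ya$ (which may be present or absent) give a small case split that terminates in a forbidden pattern. When $a = b$, I would observe that if every other $c \in K \setminus \{a\}$ were adjacent to both $x$ and $y$ then $(K \setminus \{a\}) \cup \{x, y\}$ would be a clique of size $|K| + 1$, contradicting maximality; so some such $c$ is non-adjacent to at least one of $x, y$, and the induced subgraph on $\{a, c, x, y\}$ becomes one of the forbidden graphs.

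The main obstacle will be organizing this case analysis cleanly, especially the coincidence case $a = b$ and the bookkeeping of which subset of four or five vertices produces which forbidden subgraph. The underlying mechanism — converting non-adjacencies inside a maximum clique into forbidden small configurations outside it — is uniform, but writing it out requires careful attention to every combination of edges among $\{a, b, c, x, y\}$.
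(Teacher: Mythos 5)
First, note that the paper does not prove this statement at all: it is quoted from F\"oldes and Hammer as background, so there is no internal proof to compare against. Judging your argument on its own terms: the forward direction (clique--independent-set partition implies $G$ and $\overline{G}$ chordal) is correct, and the translation of the hypothesis into $\{2K_2, C_4, C_5\}$-freeness is also correct. The problem is in the backward direction.

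The gap is that your case analysis does not always terminate in a forbidden configuration, because an arbitrary maximum clique $K$ simply does not work. Concretely, in the case $a \neq b$ with exactly one of $xb, ya$ present (say $xb \in E$, $ya \notin E$), the set $\{a,b,x,y\}$ induces a $P_4$, namely $a\hbox{--}b\hbox{--}x\hbox{--}y$, which is not one of $2K_2, C_4, C_5$; the same $P_4$ outcome occurs in your $a=b$ subcase when the chosen $c$ is adjacent to exactly one of $x,y$. This is not a bookkeeping issue that a fifth vertex repairs: take $G = P_4$ with vertices $s\hbox{--}c\hbox{--}x\hbox{--}y$. This graph is $\{2K_2, C_4, C_5\}$-free (indeed split), $K = \{s,c\}$ is a maximum clique, yet $V \setminus K = \{x,y\}$ spans an edge, and $K$ contains no further vertices to bring into play. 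So the statement you are trying to contradict (``some maximum clique leaves an edge outside'') is actually \emph{true} for some split graphs, and no contradiction can be derived. The standard repair is to choose $K$ extremally --- e.g., among all maximum cliques, one minimizing the number of edges induced on $V \setminus K$ --- and then, in the surviving $P_4$-type case, show that $x$ misses exactly one vertex $a$ of $K$, that $(K \setminus \{a\}) \cup \{x\}$ is again a maximum clique, and that swapping strictly decreases the number of edges outside the clique (this last step uses the forbidden subgraphs again, including $C_5$), contradicting the extremal choice. Without some such exchange/extremality argument your proof of the backward direction does not go through.
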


In \cite{cozzenrobert}, Cozzens {\sl et al.} studied the boxicity of
split graphs and gave an upper bound.

\begin{theo} [\cite{cozzenrobert}] \label{2.3}
Let $G$ be a split graph with vertex partition $V(G) = S \cup K$,
$S$ an independent set and $K$ a clique. Then provided $K \neq
\emptyset$, $box(G) \leq \min\{\left \lceil\frac{|K|}{2} \right
\rceil, \left \lceil\frac{|S|}{2} \right \rceil\}$.
\end{theo}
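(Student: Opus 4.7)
The plan is to invoke Theorem \ref{1} and construct, for $k = \min\{\lceil |K|/2\rceil, \lceil |S|/2\rceil\}$, interval graphs $I_1, \ldots, I_k$ with $G = I_1 \cap \cdots \cap I_k$. Every $I_j$ must contain all edges of $G$ (so that $K$ remains a clique in each $I_j$), and every non-edge of $G$ must be killed by at least one $I_j$. The non-edges of $G$ come in two kinds: pairs inside the independent set $S$, and pairs $(s,w)$ with $s\in S$, $w\in K\setminus N(s)$.

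For the bound $\lceil |S|/2\rceil$, I would enumerate $S=\{s_1,\ldots,s_p\}$ and group the vertices into pairs $\{s_{2i-1},s_{2i}\}$ (with one singleton if $p$ is odd). For the pair indexed by $i$ I would build $I_i$ as follows: place $s_{2i-1}$ as an interval at the far right of the real line, $s_{2i}$ at the far left, and every remaining $S$-vertex at one common interval in the middle. Each $w\in K$ is assigned one of four interval templates determined by whether $w\in N(s_{2i-1})$ and whether $w\in N(s_{2i})$, chosen so that every $K$-interval contains a large central segment. This makes $K$ a clique in $I_i$, gives the two pair vertices their correct $K$-adjacencies, makes every other $S$-vertex adjacent in $I_i$ to every $K$-vertex, and makes $s_{2i-1},s_{2i}$ disjoint from each other and from all other $S$-intervals.

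I would then verify non-edges die. An $S$-$S$ non-edge involving $s_j$ is killed in $I_{\lceil j/2\rceil}$, since there $s_j$ is at a far end while every other $S$-vertex is in the middle or at the opposite end. An $S$-$K$ non-edge $(s_j,w)$ is killed in the same $I_{\lceil j/2\rceil}$: $s_j$ sits at an extreme end, and the template chosen for $w$ (which omits $s_j$ from its ``neighbour side'') does not reach that end. The bound $\lceil |K|/2\rceil$ follows from a dual construction: pair up $K$, place $w_{2i-1},w_{2i}$ at the two extreme ends of $I_i$ and give the remaining $K$-vertices a super-long middle interval (keeping $K$ a clique and letting every $K$-$S$ edge of $G$ survive in $I_i$); then place each $S$-vertex at a short interval inside the region prescribed by its adjacency to the two pair members, and choose these short intervals pairwise disjoint so that every $S$-$S$ non-edge dies inside a single $I_i$.

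The main obstacle is simultaneously satisfying three competing requirements inside each $I_i$: keeping $K$ a clique, realising the correct adjacencies between the pair vertices and all of $K$, and making the $S$-intervals behave correctly (disjoint from the pair intervals, and pairwise disjoint in the $\lceil |K|/2\rceil$ construction). The technical core is exhibiting four concrete interval templates whose pairwise intersections are all non-empty but that realise each of the four adjacency ``types'' correctly; once the templates are fixed, the remaining verification that $G = \bigcap_i I_i$ reduces to a short case analysis over non-edge types.
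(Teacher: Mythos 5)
The paper itself gives no proof of this statement --- it is quoted verbatim from Cozzens and Roberts --- so there is no internal proof to compare against; judged on its own, your plan is correct, and your $\lceil |S|/2\rceil$ half is exactly the device this paper deploys later anyway (Section 4, construction of $I_i$ for $1 \le i \le l$): the two pair vertices at $[0,1]$ and $[4,5]$, all other $S$-vertices at $[2,3]$, and the four $K$-templates $[0,3]$, $[2,5]$, $[0,5]$, $[2,3]$, which pairwise intersect because they all contain the point $3$; every check you list goes through with these values. The one place your sketch needs tightening is the dual $\lceil |K|/2\rceil$ half. The paired vertices $w_{2i-1}, w_{2i}$ both lie in the clique $K$, so they cannot sit at ``two extreme ends'' as disjoint intervals: they must be given overlapping intervals whose common part is a nondegenerate segment, because every $s \in S$ adjacent to both of them has to be placed inside that common part, and there may be many such $s$ which must additionally be pairwise disjoint there. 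Likewise the long interval carrying $K \setminus \{w_{2i-1}, w_{2i}\}$ must protrude beyond both pair intervals on at least one side, or else an $S$-vertex adjacent to neither pair member has nowhere to live. Concretely, $f(w_{2i-1}) = [0,7]$, $f(w_{2i}) = [3,10]$, the rest of $K$ at $[-5,15]$, and short pairwise-disjoint $S$-intervals placed in $[3,7]$, $[0,3)$, $(7,10]$, or $[-5,0) \cup (10,15]$ according to adjacency type, satisfies all three of your competing requirements simultaneously, and then the case analysis over non-edges closes as you describe.
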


\section{Cubicity and vertex cover}

In this section, we give a tight upper bound for cubicity of a graph
$G$ in terms of the cardinality of its minimum vertex cover. In
particular we show that $cub(G) \leq t + \lceil{\log \ (n-t)}\rceil
- 1$, where $|MVC| = t$ and $n$ is the number of vertices of $G$.

Let $MVC = \{v_1, v_2,\ldots, v_t\}$. Clearly $A = V - MVC$ is an
independent set in $G$. Let $A = \{w_0, w_1,\ldots ,w_{\alpha
-1}\}$, where $|A| = n - t = \alpha$. Next, we construct $t +
\left\lceil{\log \ (n-t)}\right\rceil - 1$, unit interval super
graphs of $G$, say $U_1, U_2,\ldots, U_{t+\left\lceil{\log \
(n-t)}\right\rceil - 1}$, as follows.

\noindent{\bf Construction of $U_i$, for $1 \leq i \leq t-1$:} Let
$MVC' = MVC - \{v_t\}$. So, $|MVC'| = t-1$. For each $v_i \in MVC'$,
$1 \leq i \leq t-1$, we construct a unit interval graph $U_i$. To
construct $U_i$, map each $x \in G$ to a unit interval $f_i(x)$ as
follows.

\vspace{-.6cm}

\begin{eqnarray*}
  f_i(x) &=& [0, 1] \ \ \ \ \ \ \ \ \ \ if \ x = v_i  \\
  &=& [1, 2] \ \ \ \ \ \ \ \ \ \ if \ x \in N_G(v_i).\\
         &=& [2, 3] \ \ \ \ \ \ \ \ \ \ if \ x \in V(G) - (N_G(v_i) \cup \{v_i\}).
\end{eqnarray*}

\noindent {\bf Claim:} For each unit interval graph $U_i$, $1 \leq i
\leq t-1$, $E(G) \subseteq E(U_i)$.

\begin{proof}
It is easy to see that for all $x \in N_{G}(v_i) \cup \{v_i\}$, $1
\in f_i(x)$. So, $N_{G}(v_i) \cup \{v_i\}$ induces a clique in
$U_i$. Also, for all $x \in V(G) - \{v_i\}$, $2 \in f_i(x)$. That
is, $V(G) - \{v_i\}$ induces a clique in $U_i$. So, we infer that
$E(G) \subseteq E(U_i)$, for each $i$, $1 \leq i \leq t-1$.
\end{proof}

%For $I_i, 1\leqslant i\leqslant |MVC|-1$, we define the interval
%$[l_{i}(u),r_{i}(u)]$ for each vertex $u\in V(G)$ as follows:  Let
%$l_{i}(v_{i})=0$.  For $v\in V - \{v_{i}\}$, $l_{i}(v)=1$ if
%$(v_{i},v)\in E(G)$ and $l_{i}(v)=2$ otherwise. Note that for each
%vertex $v\in V(G)$, $r_{i}(v)=l_{i}(v)+1$.

\noindent{\bf Construction of $U_{t+j}$, for $0 \leq j \leq
\left\lceil\log \ (n-t)\right\rceil - 1$:} Recall that $MVC = \{v_1,
v_2,\ldots,v_t\}$ and $A = \{w_0, w_1,\ldots,w_{\alpha-1}\}$. It is
easy to see that $v_t$ is adjacent to at least one vertex of $A$
since $MVC$ is a minimum vertex cover of $G$. Without loss of
generality suppose $v_tw_0 \in E(G)$. For each $j$, $0 \leq j \leq
\left\lceil\log \ (n-t)\right\rceil - 1$, we define a function $b_j
: A \longrightarrow \{0,1\}$ as follows:

\vspace{-.6cm}

\begin{eqnarray*}
  b_j(w_k) &=& 0 \ \ \ \ \ if \ the \ (j+1)th \ least \ significant \ bit \ of \ k \ is \ 0\\
   &=& 1 \ \ \ \ \ otherwise.
\end{eqnarray*}

%Consider the binary representation of $w_i$, $0 \leq i \leq \alpha -
%1$. Let $b_j$ be the binary representation of $w_j$ and $b_{j}^{i}$
%correspond to the $i^{th}$ least significant bit of the binary
%representation. Note that $|b_j| = \left\lceil\log \
%(n-t)\right\rceil$. For each bit of $b_j$, we construct a unit
%interval graph as follows.

\noindent To construct $U_{t+j}$, $0 \leq j \leq \left\lceil\log \
(n-t)\right\rceil - 1$, we map each $x \in V(G)$ to a unit interval
as follows.

\vspace{-.6cm}

\begin{eqnarray*}
  f_{t+j}(x) &=& [0.5, 1.5] \ \ \ \ \ \ \ \ \ if \ x = v_t. \\
             &=& [1, 2] \ \ \ \ \ \ \ \ \ \ \ \ \ if \ x \in MVC'. \\
             &=& [0, 1] \ \ \ \ \ \ \ \ \ \ \ \ \ if \ x = w_0. \\
             &=& [0, 1] \ \ \ \ \ \ \ \ \ \ \ \ \ if \ x \in A - \{w_0\} \ and \ b_j(x) = b_j(w_0). \\
             &=& [1.5, 2.5] \ \ \ \ \ \ \ \ if \ x \in A - \{w_0\} \ and \ b_j(x) \neq b_j(w_0) \ and \ xv_t \in E(G). \\
             &=& [2, 3] \ \ \ \ \ \ \ \ \ \ \ \ \ if \ x \in A - \{w_0\} \ and \ b_j(x) \neq b_j(w_0) \
             and \ xv_t \notin E(G).
\end{eqnarray*}

\noindent{\bf Claim 2:} For each unit interval graph $U_{t+j}$, $0
\leq j \leq \left\lceil\log \ (n-t)\right\rceil - 1$, $E(G)
\subseteq E(U_{t+j})$.

\begin{proof}
It is easy to see that, for all $x \in MVC$, $1 \in f_{t+j}(x)$. So,
$MVC$ induces a clique in $U_{t+j}$. Also, for all $y \in N_G(v_t)$,
either $1 \in f_{t+j}(y)$ or $1.5 \in f_{t+j}(y)$. As $f_{t+j}(v_t)
= [0.5, 1.5]$, $f_{t+j}(v_t) \cap f_{t+j}(y) \neq \emptyset$, for
all $y \in N_G(v_t)$. So, $N_G(v_t) \subseteq N_{U_{t+j}}(v_t)$. Let
$w_i \in A$. Now, either $f_{t+j}(w_i) = [0, 1]$ or $[1.5, 2.5]$ or
$[2, 3]$. In all the cases, it is easy to see that $f_{t+j}(w_i)
\cap f_{t+j}(v) \neq \emptyset$, for all $v \in MVC'$ since
$f_{t+j}(v) = [1, 2]$. That is, for each $w_i \in A$, $w_iv \in
E(U_{t+j})$, for all $v \in MVC'$. Hence for each $j$, $0 \leq j
\leq \left\lceil\log \ (n-t)\right\rceil - 1$, $E(G) \subseteq
E(U_{t+j})$.
\end{proof}

The following lemma follows from Claim 1 and Claim 2.

\begin{lemma}
For each unit interval graph $U_i$, $1 \leq i \leq t+\left\lceil\log
\ (n-t)\right\rceil - 1$, $E(G) \subseteq E(U_i)$.
\end{lemma}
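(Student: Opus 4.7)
The plan is almost trivial: the lemma is a direct consolidation of Claim 1 and Claim 2, so my proof would simply partition the index range $\{1,2,\ldots,t+\lceil\log(n-t)\rceil-1\}$ into the two ranges that these claims already cover.

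First I would observe that the indices $i$ with $1 \le i \le t-1$ correspond exactly to the unit interval graphs $U_i$ built in the first construction (one for each vertex of $MVC' = MVC - \{v_t\}$). Claim 1 established $E(G) \subseteq E(U_i)$ for all such $i$, so this case is done.

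Next, I would rewrite the remaining indices $t \le i \le t+\lceil\log(n-t)\rceil-1$ in the form $i = t+j$ with $0 \le j \le \lceil\log(n-t)\rceil-1$. These are precisely the unit interval graphs constructed in the second phase, and Claim 2 already proved $E(G) \subseteq E(U_{t+j})$ for every such $j$.

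Since the two ranges together exhaust $\{1, 2, \ldots, t+\lceil\log(n-t)\rceil-1\}$, and both claims give the required edge containment on their respective ranges, the conclusion $E(G) \subseteq E(U_i)$ follows for every $i$ in the full range. There is no real obstacle here: the content of the lemma is entirely carried by Claims 1 and 2, and the lemma simply serves as a convenient packaging of those two facts for later reuse (presumably to argue that $G = U_1 \cap U_2 \cap \cdots \cap U_{t+\lceil\log(n-t)\rceil-1}$, which will additionally require showing that every non-edge of $G$ is excluded by at least one $U_i$).
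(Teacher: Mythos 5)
Your proof is correct and is exactly the paper's argument: the paper states that the lemma ``follows from Claim 1 and Claim 2,'' which is precisely your partition of the index range into $1 \le i \le t-1$ (Claim 1) and $i = t+j$ with $0 \le j \le \left\lceil\log (n-t)\right\rceil - 1$ (Claim 2). Nothing is missing.
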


%\footnote{LSB stands for Least Significant Bit. For the purpose of
%the proof, we can consider the bits in any arbitrary order}

\begin{lemma}
For any $(x, y) \notin E(G)$, there exists some $i$, $1 \leq i \leq
t+\left\lceil\log \ (n-t)\right\rceil - 1$, such that $(x, y) \notin
E(U_i)$.
\end{lemma}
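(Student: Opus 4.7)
The plan is a case analysis on a non-edge $(x,y) \notin E(G)$, partitioned by where $x$ and $y$ sit with respect to $MVC = \{v_1, \ldots, v_t\}$ and $A = \{w_0, \ldots, w_{\alpha-1}\}$. The guiding observation is that each $U_i$ for $1 \le i \le t-1$ already separates $v_i$ from every non-neighbour of $v_i$, since $f_i(v_i) = [0,1]$ while $f_i(z) = [2,3]$ for all $z \in V(G) - (N_G(v_i) \cup \{v_i\})$; and each $U_{t+j}$ is designed to separate pairs inside $A$ (and $v_t$ from non-neighbours in $A$) using a single bit of the binary expansion of the indices.

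First I would dispose of every non-edge having at least one endpoint in $MVC' = MVC - \{v_t\}$. If $x = v_i$ with $i \le t-1$, then $y \ne v_i$ and $y \notin N_G(v_i)$ (as $(x,y) \notin E(G)$), so $f_i(x) = [0,1]$ and $f_i(y) = [2,3]$ are disjoint, hence $(x,y) \notin E(U_i)$. The case $(v_t, v_q) \notin E(G)$ with $q \le t-1$ also falls here, handled by $U_q$ since $v_t \in V(G) - (N_G(v_q) \cup \{v_q\})$. After this step only two shapes of non-edges remain: $(w_a, w_b)$ with $w_a, w_b \in A$, and $(v_t, w_b)$ with $w_b \in A$.

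Both remaining shapes are dispatched inside the graphs $U_{t+j}$, using that $b_j(w_0) = 0$ for every $j$. For $(w_a, w_b)$ with $a \ne b$, pick any bit position $j$ at which the binary expansions of $a$ and $b$ differ; then exactly one of $b_j(w_a), b_j(w_b)$ equals $b_j(w_0)$, so one of $f_{t+j}(w_a), f_{t+j}(w_b)$ is $[0,1]$ while the other is $[1.5, 2.5]$ or $[2, 3]$, and these are disjoint. For $(v_t, w_b) \notin E(G)$, the hypothesis $v_t w_0 \in E(G)$ forces $b \ne 0$, so some bit of $b$ is $1$, i.e.\ there is $j$ with $b_j(w_b) \ne b_j(w_0)$; combined with $v_t w_b \notin E(G)$, the construction assigns $f_{t+j}(w_b) = [2,3]$, which misses $f_{t+j}(v_t) = [0.5, 1.5]$.

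The delicate step is the last one: in every $U_{t+j}$ the interval $[0.5, 1.5]$ assigned to $v_t$ overlaps the ``$0$-bucket'' interval $[0,1]$, so no $U_{t+j}$ can separate $v_t$ from $w_0$. The construction finesses this by naming a known neighbour of $v_t$ in $A$ as $w_0$, so that we never \emph{need} to separate $v_t$ from $w_0$. This naming is legitimate because $v_t$ has at least one neighbour in $A$, which follows from the minimality of $MVC$ (otherwise $MVC - \{v_t\}$ would still be a vertex cover). Every remaining verification is routine interval-disjointness bookkeeping.
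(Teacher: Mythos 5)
Your proof is correct and follows essentially the same case analysis as the paper's: non-edges meeting $MVC'$ are separated by the corresponding $U_i$, and the remaining non-edges (both endpoints in $A$, or $v_t$ paired with a vertex of $A$) are separated by a $U_{t+j}$ chosen via a differing bit, with the same justification that $w_0$ can be taken to be a neighbour of $v_t$ by minimality of $MVC$. Your observation that $b_j(w_0)=0$ for all $j$ is a small streamlining of the paper's Case~3, but the argument is otherwise identical.
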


\begin{proof}
Suppose $(x, y) \notin E(G)$.

\noindent{\bf Case 1:} $\{x, y\} \subseteq MVC$.

It is easy to see that either $x$ or $y$, say $x$, will be present
in $MVC'$. Let $x = v_i$, for some $i$, $1 \leq i \leq t-1$. Now, in
$U_i$, as $y \notin N_G(v_i)$, $f_i(x) = [0, 1]$ and $f_i(y) = [2,
3]$. So, $f_i(x) \cap f_i(y) = \emptyset$. Hence, $x$ is
non-adjacent to $y$ in $U_i$.

\noindent{\bf Case 2:} $x \in MVC$ and $y \in A$.

First suppose $x \in MVC'$. Let $x = v_i$, for some $i$, $1 \leq i
\leq t-1$. Now, in $U_i$, as $y \notin N_G(v_i)$, $f_i(x) = [0, 1]$
and $f_i(y) = [2, 3]$. Hence, $x$ is non-adjacent to $y$ in $U_i$.

Next suppose $x = v_t$. It is easy to see that $y \neq w_0$, as
$w_0v_t \in E(G)$ by assumption. Let $y = w_s$, for some $s$, $1
\leq s \leq \alpha - 1$. Since $s > 0$, clearly there exists a $l$,
$0 \leq l \leq \left\lceil\log \ (n-t)\right\rceil - 1$, such that
$b_l(w_s) \neq b_l(w_0)$. Now, in $U_{t+l}$, $f_{t+l}(w_s) = [2,
3]$. But $f_{t+l}(v_t) = [0.5, 1.5]$. As $f_{t+l}(v_t) \cap
f_{t+l}(w_s) = \emptyset$, $x$ and $y$ are non-adjacent in
$U_{t+l}$.

\noindent{\bf Case 3:} $\{x, y\} \subseteq A$.

Let $x = w_r$ and $y = w_s$, $0 \leq r, s \leq \alpha -1$. Since $r
\neq s$, there exists a $j$, $0 \leq j \leq \left\lceil\log \
(n-t)\right\rceil - 1$, such that $b_j(w_r) \neq b_j(w_s)$. As
$b_j(w_0)$ is either $0$ or $1$, $b_j(w_0)$ is different from either
$b_j(w_r)$ or $b_j(s)$. Without loss of generality let $b_j(w_0)
\neq b_j(w_s)$. So, $b_j(w_0) = b_j(w_r)$ as $b_j(w_r) \neq
b_j(w_s)$. Now, in $U_{t+j}$, $f_{t+j}(w_r) = [0, 1]$ and
$f_{t+j}(w_s) = [1.5, 2.5]$ or $[2, 3]$. In both the cases
$f_{t+j}(w_r) \cap f_{t+j}(w_s) = \emptyset$. Hence $x = w_r$ and $y
= w_s$ are non-adjacent in $U_{t+j}$, $0 \leq l \leq \left\lceil\log
\ (n-t)\right\rceil - 1$.
\end{proof}

%So, $b_r \neq b_s$. Let $b_r^{j} \neq b_s^{j}$, $0 \leq j \leq
%\left\lceil\log \ (n-t)\right\rceil - 1$. Now, The $j^{th}$ bit of
%$w_0$ will either be equal to $b_r^{j}$ or $b_s^{j}$. Suppose
%$b_0^{j} \neq b_s^{j}$. So, $b_0^{j} = b_r^{j}$. Note that $w_0$ and
%$w_r$ may also be same.

By combining the above two lemmas we get $E(G) = E(U_1) \cap E(U_2)
\cap \cdots \cap E(U_{t+\left\lceil\log \ (n-t)\right\rceil - 1})$.
Thus by Theorem \ref{1}, we have the following.

\begin{theo}\label{6.3}
For a graph $G$, $cub(G) \leq t + \lceil{\log \ (n-t)}\rceil - 1$,
where $|MVC| = t$ and $n$ is the number of vertices of $G$.
\end{theo}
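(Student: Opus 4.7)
My plan is to exhibit $G$ as the edge-intersection of the $t + \lceil \log(n-t) \rceil - 1$ unit interval graphs $U_1, U_2, \ldots, U_{t+\lceil \log(n-t)\rceil - 1}$ already constructed above, and then invoke Theorem \ref{1}, which identifies $cub(G)$ with the smallest number of unit interval supergraphs of $G$ whose edge sets intersect to exactly $E(G)$.

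The first step is the containment $E(G) \subseteq \bigcap_i E(U_i)$: by the first lemma (which packages Claim 1 and Claim 2), each $U_i$ is a supergraph of $G$, so every edge of $G$ survives the intersection. The second step is the reverse containment $\bigcap_i E(U_i) \subseteq E(G)$, which is precisely the content of the second lemma: for every non-edge $(x,y) \notin E(G)$, at least one $U_i$ separates $x$ from $y$. Its proof splits into the three cases $\{x,y\} \subseteq MVC$, $x \in MVC$ with $y \in A$, and $\{x,y\} \subseteq A$; in each case either one of the $t-1$ graphs built around a vertex of $MVC'$, or one of the $\lceil \log(n-t) \rceil$ bit-encoding graphs, does the job. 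Combining the two inclusions yields $G = U_1 \cap U_2 \cap \cdots \cap U_{t+\lceil \log(n-t)\rceil - 1}$, and a direct application of Theorem \ref{1} then gives $cub(G) \leq t + \lceil \log(n-t) \rceil - 1$, as required.

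The real substance of the argument lies not in this concluding step but in the design of the bit-encoding family $U_{t+j}$: only $\lceil \log(n-t)\rceil$ such graphs are afforded, yet collectively they must (i) remain supergraphs of $G$, (ii) separate $v_t$ from each of its non-neighbors in the independent set $A$, and (iii) separate every pair of non-adjacent vertices within $A$. The idea of anchoring the binary encoding at a vertex $w_0 \in A$ adjacent to $v_t$ (which exists because $MVC$ is minimum) is what makes all three requirements compatible simultaneously; once that obstacle has been handled in the preceding construction, the proof of the theorem itself reduces to the clean two-inclusion argument sketched above.
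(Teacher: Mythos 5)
Your proposal is correct and follows exactly the route the paper takes: the theorem is obtained by combining the two lemmas (each $U_i$ is a unit interval supergraph of $G$, and every non-edge is killed by some $U_i$) to conclude $E(G)=\bigcap_i E(U_i)$, and then applying Theorem \ref{1} to the resulting family of $t-1+\lceil\log(n-t)\rceil$ unit interval graphs. Your closing observation that the substance lies in the bit-encoding construction anchored at $w_0\in N_G(v_t)$ is also accurate, but it is part of the paper's setup rather than a point of divergence.
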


\subsection{Tightness result}

In this section we show that the upper bound given for cubicity in
Theorem \ref{6.3} is tight. Let $G$ be a star graph on $n$ vertices.
It is easy to see that $|MVC| = 1$ in $G$. So, $cub(G) \leq 1+\lceil
\log \ (n-1)\rceil-1$ by Theorem \ref{6.3}. That is, $cub(G) \leq
\left\lceil \log\ (n-1)\right\rceil$. But it is known that $cub(G) =
\left\lceil \log\ (n-1)\right\rceil$ \cite{roberts69}. So, the upper
bound for cubicity given in Theorem \ref{6.3} is tight for star
graphs.

\section{Boxicity and vertex cover}

Let $G=(V, E)$ be a graph and $MVC$ be a minimum vertex cover of
$G$. Let $A = V - MVC$. Clearly $A$ is an independent set in $G$.
Suppose $|MVC| = t$ and $ \left \lfloor \frac{t}{2} \right \rfloor =
t_1$.

Let $l$ be the biggest integer such that there exist subsets $P, Q
\subseteq MVC$ such that $P = \{a_1, a_2, \ldots, a_l\}$, $Q =
\{b_1, b_2, \ldots, b_l\}$, $P \cap Q = \emptyset$, and $(a_i, b_i)
\notin E(G)$. Next, we construct $t_1 + 1$ different interval super
graphs of $G$, say $I_1, I_2, \ldots, I_{t_1+1}$, as follows.

\noindent{\bf Construction of $I_i$, for $1 \leq i \leq l$.} Recall
that, for $1 \leq i \leq l$, $(a_i, b_i) \notin E$. For each pair
$(a_i, b_i)$, $1 \leq i \leq l$, we construct an interval graph
$I_i$. To construct $I_i$, we map each $v \in V$ to an interval
$f_i(v)$ on the real line as follows:

\vspace{-.6cm}

\begin{eqnarray*}
f_i(v) &=& [0, 1] \ \ \ \ \ if \ \ v = a_i. \\
       &=& [4, 5] \ \ \ \ \ if \ \ v = b_i. \\
       &=& [0, 3] \ \ \ \ \ if \ \ v \in N_G(a_i) - N_G(b_i). \\
       &=& [2, 5] \ \ \ \ \ if \ \ v \in N_G(b_i) - N_G(a_i). \\
       &=& [0, 5] \ \ \ \ \ if \ \ v \in N_G(a_i) \cap
       N_G(b_i).\\
       &=& [2, 3] \ \ \ \ \ if v \in V - (\{a_i, b_i\} \cup N_G(a_i) \cup
       N_G(b_i)).
  \end{eqnarray*}

%\begin{eqnarray*}
%  if \ \ v&=&a_i, \ then \ f_i(v) = [0,1]. \\
%  if \ \ v&=&b_i, \ then \ f_i(v) = [4,5]. \\
%  if \ \ v&\in&N_{a_i}(G) - N_{b_i}(G), \ then \ f_i(v) = [0,3]. \\
%  if \ \ v&\in&N_{b_i}(G) - N_{a_i}(G), \ then \ f_i(v) = [2,5]. \\
%  if \ \ v&\in&N_{a_i}(G) \cap N_{b_i}(G), \ then \ f_i(v) = [0,5]. \\
%  if \ \ v&\in&V - \{a_i, b_i, N_{a_i}(G) \cup N_{b_i}(G)\}, \ then \ f_i(v) = [3,4].
%  \end{eqnarray*}

%\[ f(x) = \beginfcasesg
%sin(x), & \textf if g 0 \leq x \leq \pi cos(x), & \textf if pi g < x
%\leq 2*\pi
%\endfcasesg \]

%As $P \cap Q = \emptyset$, $(a_i, b_i) \notin E$, for $a_i \in P$
%and $b_i \in Q$. For $1 \leq i \leq l$, we define the interval
%$[l_i(v), r_i(v)]$ for each $v \in V$ as follows: for $a_i$,
%$l_i(a_i) = 0$ and $r_i(a_i) = 1$, for $b_i$, $l_i(b_i) = 4$ and
%$r_i(b_i) = 5$. For $x \in N_{a_i}(G) - N_{b_i}(G)$, $l_i(x) = 0$
%and $r_i(x) = 3$. For $x \in N_{b_i}(G) - N_{a_i}(G)$, $l_i(x) = 2$
%and $r_i(x) = 5$. For $x \in N_{a_i}(G) \cap N_{b_i}(G)$, $l_i(x)
%=0$ and $r_i(x) = 5$. For each $x \in V - \{\{a_i, b_i\} \cup
%\{N_{a_i} \cup N_{b_i}\}\}$, assign $l_i(x) = 2$ and $r_i(x) = 3$.\\

\noindent{\bf Claim 1.} For each $i$, $1 \leq i \leq l$, $E(G)
\subseteq E(I_i)$.

\begin{proof}
It is easy to see that if $v \in MVC - \{a_i,b_i\}$, then $3 \in
f_i(v)$. So, $MVC - \{a_i,b_i\}$ is a clique in each $I_i$. If $v
\in N_G(a_i) \cup \{a_i\}$, then $0 \in f_i(v)$. So, $N_G(a_i)
\subseteq N_{I_i}(a_i)$. Similarly, if $v \in N_G(b_i) \cup
\{b_i\}$, then $5 \in f_i(v)$. That is, $N_G(b_i) \subseteq
N_{I_i}(b_i)$. So, $E(G) \subseteq E(I_i)$, for each $i$, $1 \leq i
\leq l$.
\end{proof}

%\noindent{\bf Claim:} For each $i$, $1 \leq i \leq l$, $I_i$ is an
%interval graph.
%
%Note that each $I_i$ contains 3 maximal cliques, namely $C_1= \{a_i
%\cup N(a_i)\}$, $C_2 = \{b_i \cup N(b_i)\}$, and $\{C\}$. As $a_i$
%and $b_i$ are non-adjacent in $I_i$, $C$ is the only separating
%clique of $I_i$. Now, it is easy to see that the clique tree of
%$I_i$ is a path $C_1, C, C_2$. Hence, $I_i$ is an interval graph for
%each $i$.

\noindent{\bf Construction of $I_i$, for $l+1 \leq i \leq t_1$,}
(assuming $t_1 \geq l+1$). Let $C = MVC - \{P \cup Q\}$. Clearly $C$
induces a clique in $G$ by the maximality of $l$. Let $|C| = k' = t
- 2l$. Since $t_1 = \left\lfloor\frac{t}{2}\right\rfloor$, we have
$k' = t - 2l \geq 2$ and $C \geq 2$. Let $C =
\{c_1,c_2,\ldots,c_{k'}\}$. If $k'$ is even, then let $k'' = k'$,
otherwise let $k'' = k' - 1$. Let $C' = \{c_1, c_2, \ldots,
c_{k''}\}$. Clearly $C' \subseteq C$.

Let $G'$ be the graph induced by $C' \cup A$ in $G$. As $C'$ induces
a clique and $A$ induces an independent set in $G$, $G'$ is a split
graph. So by Theorem \ref{2.3}, \ $box(G') \leq$ min$\{ \left \lceil
\frac{k''}{2} \right \rceil, \left\lceil \frac{|A|}{2}\right\rceil\}
\leq \frac{k''}{2}$ (as $k''$ is even and $k'' \geq 2$). That is,
$G'$ is the intersection of at most $\frac{k''}{2}$ interval graphs,
say $I'_1, I'_2, \ldots, I'_{\frac{k''}{2}}$, by Theorem \ref{1}.
Note that $l + \frac{k''}{2} = \left\lfloor\frac{t}{2}\right\rfloor
= t_1$. Let $g_i$ be a function that maps each $v \in V(I'_i)$ to a
closed interval on the real line such that $I'_i$, for each $i$, $1
\leq i \leq \frac{k''}{2}$, is the intersection graph of the family
of intervals $\{g_i(v) : v \in V(I'_i)\}$. Now, let $L_j$ and $R_j$
be numbers on the real line such that $L_j \leq x$, for all $x \in
\bigcup_{v \in V(I'_j)}(g_j(v))$ and $R_j \geq y$, for all $y \in
\bigcup_{v \in V(I'_j)}(g_j(v))$. To construct $I_i$, $l+1 \leq i
\leq t_1$, map each $v \in V(G)$ to a closed interval $f_{l+j}(v)$,
$1 \leq j \leq \frac{k''}{2}$ on the real line as follows.

\vspace{-.6cm}

\begin{eqnarray*}
% \nonumber to remove numbering (before each equation)
  f_{l+j}(v) &=& g_j(v) \ \ \ \ \ \ if \ v \in V(I'_j) = V(G)-(P \cup Q)-(C-C')  \\
         &=& [L_j, R_j] \ \ \ \ otherwise.
\end{eqnarray*}

%For each $x \in P \cup Q$, put the interval $[L_i, R_i]$ in $I'_i$,
%$1 \leq i \leq \frac{k''}{2}$. After this construction let the
%interval graphs be $I_{l+1}, I_{l+2}, \ldots, I_{l+\frac{k''}{2}}$.
%Note that $l + \frac{k''}{2}$ is nothing but $t_1$. Clearly for each
%$i$, $V(I_i)
%= V(G)$, $l+1 \leq i \leq t_1$.\\

\noindent{\bf Claim 2.} For each $I_i$, $l+1 \leq i \leq t_1$, $E(G)
\subseteq E(I_i)$.

\begin{proof}
By the construction of $I_i$, $l+1 \leq i \leq t_1$, it is easy to
see that if $v \in P \cup Q \cup (C-C')$, then $L_j \in f_{l+j}(v)$,
$1 \leq j \leq \frac{k''}{2}$. So, $P \cup Q \cup (C-C')$ induces a
clique in each $I_i$, $l+1 \leq i \leq t_1$. Also, if $u \in P \cup
Q \cup (C-C')$, then $uv \in E(I_i)$, for each $v \in V(I_i) - \{P
\cup Q \cup (C-C')\}$, by the definition of $L_i$ and $R_i$. As the
collection of interval graphs $I'_1, I'_2,\ldots,I'_{\frac{k''}{2}}$
is an interval graph representation of $G'$, by Theorem \ref{1},
$E(G') \subseteq E(I'_j)$, $1 \leq j \leq \frac{k''}{2}$. But in
$I_{l+j}$, $f_{l+j}(v) = g_j(v)$, for all $v \in V(I'_j)$, $1 \leq j
\leq \frac{k''}{2}$. So, $E(G') \subseteq E(I_{l+j})$, $1 \leq j
\leq \frac{k''}{2}$. Hence for each $I_i$, $l+1 \leq i \leq t_1$,
$E(G) \subseteq E(I_i)$.
\end{proof}

\noindent{\bf Construction of $I_{t_1 + 1}$.}  We construct the last
interval graph $I_{t_1+1}$ as follows. If $k'$ is odd then suppose
$C - C' = \{v\}$. So, $v \notin V(G')$. Let $MVC' = MVC$ if $k'$ is
even and $MVC'=MVC - \{v\}$ if $k'$ is odd. Let $A = \{x_1, x_2,
\ldots, x_r\}$, where $|A| = r$. Note that $A \neq \emptyset$. If
$k'$ is odd, then without loss of generality suppose $\{x_1,
x_2,\ldots,x_s\} = A \cap N_G(v)$. Now, map each vertex $x$ of $G$
to an interval $f_{t_1+1}(x)$ on the real line as follows.

\vspace{-.6cm}

 \begin{eqnarray*}
  f_{t_1+1}(x) &=&  [2i - 1, 2i] \ \ \ \ \ \ if \ x \in A \ and \ x=x_i.\\
       &=&  [1, 2r] \ \ \ \ \ \ \ \ \ \ \ \ if \ x \in MVC'. \\
       if \ k' \ is \ odd \ then \ f_{t_1+1}(v) &=&  [1, 2s] \ \ \ \ \ \ \ \ \ \ \ \
 \end{eqnarray*}

\noindent{\bf Claim 3.} $E(G) \subseteq E(I_{t_1+1})$.

\begin{proof}
It is easy to see that if $x \in MVC$, then $1 \in f_{t_1+1}(x)$.
So, $MVC$ induces a clique in $I_{t_1+1}$. Also, if $x \in MVC' \cup
\{x_i\}$, for some $i$, $1 \leq i \leq r$, then $2i \in
f_{t_1+1}(x)$. That is, each $x_i \in A$ is adjacent to all the
vertices of $MVC'$. If $x = x_i$, $1 \leq i \leq s$, then $2i \in
f_{t_1+1}(a_i) \cap f_{t_1+1}(v) \neq \emptyset$. Thus $(x_i, v) \in
E(I_{t_1+1})$ for each $i$, $1 \leq i \leq s$. That is, $N_{G}(v)
\subseteq N_{I_{t_1+1}}(v)$. So, $E(G) \subseteq E(I_{t_1+1})$.
\end{proof}

\noindent The following lemma follows from Claim 1, Claim 2, and
Claim 3.

\begin{lemma}
For each interval graph $I_i$, $1 \leq i \leq t_1+1$, $E(G)
\subseteq E(I_i)$.
\end{lemma}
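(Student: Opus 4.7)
The plan is extremely short because the three claims already cover every interval graph in the family $I_1, I_2, \ldots, I_{t_1+1}$. The proof is a simple case analysis on the index $i$, and there is essentially no obstacle: the hard work has already been done in Claims 1, 2, and 3.

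First I would fix an arbitrary $i$ with $1 \leq i \leq t_1 + 1$ and split into three cases according to the way $I_i$ was constructed. If $1 \leq i \leq l$, the graph $I_i$ is one of the interval graphs built from a non-edge $(a_i, b_i)$ of $G$ inside $MVC$, and Claim 1 directly gives $E(G) \subseteq E(I_i)$. If $l + 1 \leq i \leq t_1$, then $I_i$ is obtained by extending the interval representation of the split graph $G'$ induced on $C' \cup A$ (using the bound from Theorem \ref{2.3}), and Claim 2 gives $E(G) \subseteq E(I_i)$. Finally, if $i = t_1 + 1$, the interval graph $I_{t_1+1}$ is the special construction handling the leftover vertex $v$ of $C - C'$ (when $k'$ is odd) together with $A$ and $MVC'$, and Claim 3 yields $E(G) \subseteq E(I_{t_1+1})$.

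Since the three ranges partition $\{1, 2, \ldots, t_1+1\}$, every $i$ falls into exactly one case, and in each case the corresponding claim supplies the required inclusion $E(G) \subseteq E(I_i)$. There is no real obstacle; the only thing to verify is that the ranges actually cover all indices, i.e., that $l + \frac{k''}{2} = t_1$ when $k'$ is even and $l + \frac{k''}{2} = t_1$ when $k'$ is odd as well (using $k'' = k' - 1$ and the definition $t_1 = \lfloor t/2 \rfloor = l + \lfloor k'/2 \rfloor$). This bookkeeping is already built into the construction, so the lemma follows immediately.
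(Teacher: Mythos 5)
Your proposal is correct and matches the paper exactly: the paper simply states that the lemma ``follows from Claim 1, Claim 2, and Claim 3,'' which is precisely the case analysis on $i$ that you spell out. Your added check that the index ranges $[1,l]$, $[l+1,t_1]$, $\{t_1+1\}$ cover everything (via $l + \frac{k''}{2} = t_1$) is harmless extra bookkeeping already implicit in the construction.
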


\begin{lemma}
For any $(x, y) \notin E(G)$, there exists some $i$, $1 \leq i \leq
t_1+1$, such that $(x,y) \notin E(I_i)$.
\end{lemma}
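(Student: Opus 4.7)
The plan is to case-split the non-edge $(x,y) \notin E(G)$ by where $x$ and $y$ lie in the partition $V(G) = MVC \cup A$, and in each case to exhibit some $I_i$ for which $f_i(x) \cap f_i(y) = \emptyset$. The three kinds of interval graph cover three complementary regimes: the first $l$ graphs $I_1,\dots,I_l$ kill non-edges incident to $P \cup Q$; the graphs $I_{l+1},\dots,I_{t_1}$, inherited from the split-graph representation of $G' = G[C' \cup A]$, kill non-edges both of whose endpoints lie in $V(G')$; and $I_{t_1+1}$ handles the leftover vertex $v \in C - C'$ (when $k'$ is odd) and, by placing $A$ on disjoint unit intervals, automatically separates pairs inside $A$.

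If $x,y \in MVC$, the maximality of $l$ forces $C$ to be a clique, so at least one endpoint, say $x$, lies in $P \cup Q$; taking $x = a_i$, the facts $y \neq a_i$ and $y \notin N_G(a_i)$ together with the definition of $f_i$ leave $f_i(y) \in \{[4,5],[2,5],[2,3]\}$, each disjoint from $f_i(a_i) = [0,1]$. If $x \in MVC$ and $y \in A$, I split further: the subcase $x \in P \cup Q$ is the same argument; if $x \in C'$ then $x,y \in V(G')$ with $(x,y) \notin E(G')$, and Theorem~\ref{1} supplies some $I'_j$ separating them, hence $I_{l+j}$ does via $g_j$; the subcase $x = v$ (possible only when $k'$ is odd) is handled by $I_{t_1+1}$, where $y \in A \setminus N_G(v)$ has index $i > s$, so $f_{t_1+1}(y) = [2i-1,2i]$ is disjoint from $f_{t_1+1}(v) = [1,2s]$. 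Finally, if $x,y \in A$, the disjoint-interval assignment in $I_{t_1+1}$ directly gives $f_{t_1+1}(x) \cap f_{t_1+1}(y) = \emptyset$.

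The principal obstacle is the first case: neither $I_{l+j}$ nor $I_{t_1+1}$ is sensitive to non-edges whose endpoints both lie in $MVC$, since $I_{l+j}$ assigns the all-covering interval $[L_j, R_j]$ to every vertex outside $V(G')$, and in $I_{t_1+1}$ every vertex of $MVC$ receives an interval containing the point $1$. Thus separation of a non-edge inside $MVC$ must come from some pair graph $I_i$ with $i \leq l$, which forces at least one endpoint to touch $P \cup Q$ — precisely what maximality of $l$ delivers, since otherwise we could augment the disjoint non-edge pairs $\{(a_i,b_i)\}_{i=1}^{l}$. Once this structural point is established, the separations in each case reduce to routine checks from the explicit formulas for $f_i$.
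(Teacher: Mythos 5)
Your proposal is correct and follows essentially the same argument as the paper: non-edges meeting $P \cup Q$ are handled by the pair graphs $I_1,\dots,I_l$ (using maximality of $l$ to force $C$ to be a clique), non-edges inside $C' \cup A$ by the split-graph representation underlying $I_{l+1},\dots,I_{t_1}$, and non-edges involving the leftover vertex $v$ or two vertices of $A$ by $I_{t_1+1}$. The only difference is cosmetic — you organize the cases by the $MVC$/$A$ partition rather than by intersection with $P \cup Q$ — and the one omitted subcase ($x = b_i$ rather than $a_i$) is symmetric.
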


\begin{proof}
Suppose $(x,y) \notin E(G)$. As $C$ induces a clique in $G$, both
$x$ and $y$ cannot be present in $C$.

\noindent{\bf Case 1:} $\{x, y\} \subseteq A$.

Let $x = x_i$ and $y = x_j$, where $i \neq j$. It is easy to see
that $f_{t_1+1}(x) \cap f_{t_1+1}(y) = \emptyset$. Hence $x$ is
non-adjacent to $y$ in $I_{t_1+1}$.

\noindent{\bf Case 2:} $\{x, y\} \cap \{P\cup Q\} \neq \emptyset$

Without loss of generality suppose $x \in P \cup Q$. So, in $I_i$,
for some $i$, $1 \leq i \leq l$, say $I_k$, $f_k(x) = [0, 1]$ or
$f_k(x) = [4, 5]$. If $f_k(x) = [0, 1]$, then $f_k(y)$ is either
$[2, 3]$, $[2, 5]$ or $[4, 5]$ and if $f_k(x) = [4, 5]$, then
$f_k(y)$ is either $[0, 1]$, $[0, 3]$ or $[2, 3]$. In both the cases
$f_k(x) \cap f_k(y) = \emptyset$. Hence $x$ is non-adjacent to $y$
in $I_k$.

\noindent{\bf Case 3:} $\{x, y\} \cap \{P\cup Q\} = \emptyset$

Now, it is easy to see that one of $x$ or $y$, say $x$, will belong
to $MVC - \{P \cup Q\}$, and $y$ will belong to $A$. If $x \in C'$,
then it is easy to see that $x, y \in V(G')$. As $I'_1,
I'_2,\ldots,I'_{\frac{k''}{2}}$ is an interval graph representation
of $G'$, by Theorem \ref{1}, there exists $k$, $1 \leq k \leq
\frac{k''}{2}$ such that $(x, y) \notin I'_k$. But in $I_{l+k}$,
$f_{l+k}(v) = g_k(v)$, for all $v \in I'_k$. So, $x$ and $y$ are
non-adjacent in $I_{l+k}$.

Next suppose $x \in C-C'$. Now, in $I_{t_1+1}$, $f_{t_1+1}(x) = [1,
2s]$ and as $y \notin N_x(G)$, $y = c_j$, where $j > s$. It is easy
to see that $f_{t_1+1}(x) \cap f_{t_1+1}(y) = \emptyset$. So, $x$
and $y$ are non-adjacent in $I_{t_1+1}$.

Hence there exists some $i$, $1 \leq i \leq t_1 + 1$, such that
$(x,y) \notin E(I_i)$.
\end{proof}

%Next, if $x$ and $y$ both belong to $MVC$, then in some $I_i$, $1
%\leq i \leq l$, say $I_k$, $f_k(x) = [0, 1]$ and $f_k(y) = [4, 5]$.
%Thus $x$ and $y$ are non-adjacent in $I_k$, $1 \leq k \leq l$. So,
%wlg suppose $x \in MVC$ and $y \in A$. If $x$ is the vertex $v$ (as
%described in the construction of $I_{t_1+1}$), then $f_{t_1+1}(x) =
%[1, 2r]$ and $f_{t_1+1} = [2j-1, 2j]$, for $r < j \leq s$ (as $v$
%and $y$ are non-adjacent). As $f_{t_1+1}(x)$ and $f_{t_1+1}(y)$ do
%not intersect, $x$ and $y$ are non-adjacent in $I_{t_1+1}$.

%If $x \notin C$, then in some $I_i$, $1 \leq i \leq l$, say $I_k$,
%$f_k(x)$ is either $[0, 1]$ or $[4, 5]$ and $f_k(y) = [2, 3]$. In
%both the cases $f_k(x)$ and $f_k(y)$ do not intersect. So, $x$ and
%$y$ are non-adjacent in $I_k$, for some $k$, $1 \leq k \leq l$.

By combining the above two lemmas we get $E(G) = E(I_1) \cap E(I_2)
\cap \cdots \cap E(I_{t_1+1})$. Thus by Theorem \ref{1}, we obtain
the following.

\begin{theo} \label{3.3}
For a graph $G$ with vertex cover $MVC$, $box(G) \leq \lfloor
\frac{t}{2}\rfloor + 1$, where $t = |MVC|$.
\end{theo}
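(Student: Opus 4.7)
My plan is to invoke Theorem~\ref{1} by exhibiting $\lfloor t/2 \rfloor + 1$ interval supergraphs of $G$ whose intersection is exactly $G$. Writing $A = V(G) \setminus MVC$, the set $A$ is independent, so every non-edge of $G$ has at least one endpoint in $MVC$; the challenging non-edges are those whose both endpoints lie in $MVC$.

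The key structural step is to take a \emph{maximum matching} in $\overline{G[MVC]}$: disjoint pairs $(a_1,b_1), \ldots, (a_l,b_l)$ in $MVC$ with $(a_i,b_i) \notin E(G)$, and set $P = \{a_1,\ldots,a_l\}$, $Q = \{b_1,\ldots,b_l\}$. By maximality of $l$, the remainder $C = MVC \setminus (P \cup Q)$ induces a clique in $G$, for otherwise another pair could be added. I would then build the family in three batches. First, for each $i \in \{1,\ldots,l\}$ construct an interval supergraph $I_i$ placing $a_i$ and $b_i$ on far-apart intervals (say $[0,1]$ and $[4,5]$), extending the neighborhoods of $a_i$ and $b_i$ inward as $[0,3]$, $[2,5]$, or $[0,5]$ according to which of them each vertex is adjacent to, and putting every other vertex on a short middle interval $[2,3]$; this preserves $E(G)$ and kills every non-edge of $G$ incident to $a_i$ or $b_i$.

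Second, consider the split subgraph $G' = G[C' \cup A]$, where $C'$ is an even-sized subset of $C$ with $|C \setminus C'| \leq 1$, with clique $C'$ and independent set $A$. By Theorem~\ref{2.3}, $box(G') \leq |C'|/2$; take such an interval representation and extend each of its interval graphs to $V(G)$ by mapping the vertices outside $V(G')$ to a universal interval meeting every interval already in use. These $|C'|/2$ supergraphs of $G$ kill every remaining non-edge within $G'$, i.e.\ those between $C'$ and $A$ and those inside $A$. Third, build one final interval supergraph $I_{t_1+1}$ mapping the vertices $x_1, \ldots, x_r$ of $A$ to disjoint intervals $[2i-1,2i]$ and mapping $MVC'$ (which is $MVC$ if $|C|$ is even, and $MVC \setminus \{v\}$ otherwise, where $v$ is the stranded vertex of $C \setminus C'$) to the full span $[1,2r]$; when $|C|$ is odd, map $v$ to $[1,2s]$ where $s = |N_G(v) \cap A|$. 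This graph is designed precisely to separate $v$ from its non-neighbors in $A$ in the odd case.

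The counting gives $l + |C'|/2 + 1 = \lfloor t/2 \rfloor + 1$ regardless of the parity of $t - 2l$. The main obstacle I expect is the case analysis checking that every non-edge $(x,y) \notin E(G)$ is killed by at least one of the constructed interval graphs: non-edges between ``crossed'' vertices of $P \cup Q$ (such as $(a_i,a_j)$ or $(a_i,b_j)$ for $i \neq j$) must be tracked through the definitions of $f_i$, non-edges between $MVC$ and $A$ split according to whether the $MVC$-endpoint lies in $P\cup Q$, in $C'$, or is the stranded $v$, and the parity bookkeeping must be handled carefully so that the final supergraph really absorbs the role of $v$ when $t - 2l$ is odd.
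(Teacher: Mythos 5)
Your proposal follows the paper's own proof essentially verbatim: the same maximum matching of non-edges in $\overline{G[MVC]}$ giving $P$, $Q$, and the clique $C$, the same first batch of interval graphs with $a_i$ on $[0,1]$ and $b_i$ on $[4,5]$, the same use of Theorem~\ref{2.3} on the split graph $G[C'\cup A]$ extended by universal intervals, the same final interval graph handling $A$ and the stranded vertex $v$, and the same parity count $l + |C'|/2 + 1 = \lfloor t/2\rfloor + 1$. The remaining case analysis you flag is exactly the one the paper carries out, and it goes through as you anticipate.
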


%\begin{proof}
%Let $G$ be a graph and $vc$ be a vertex cover of $G$. Let $|vc| = k$
%and $v_1, v_2, \ldots, v_k$ be the vertices present in $vc$. As $G$
%has a vertex cover of size $k$, $G$ has an independent set, say $A$,
%of size $n-k$.
%
%
%
%We will construct $\lceil\frac{k}{2}\rceil + 1$ ($= k_1$ say)
%interval graphs, say $I_i$, $1 \leq i \leq k_1$, and will show that
%$E(G) = \bigcap_{i=1}^{k_1}{I_i}$.
%
%For each non-adjacent pair $a_i - b_i$ of vertices in $vc(G)$, an
%interval graph is constructed as follows:
%
%
%Construct a clique $C$ of size $n-2$. Take two non-adjacent vertices
%$a_i$ and $b_i$ of $G$ which are present in $vc(G)$. Make $a_i$
%adjacent to the vertices $N(a_i)$ and make $b_i$ adjacent to the
%vertices $N(b_i)$ in $G$. Note that, $C \supseteq N(a_i) \cup
%N(b_i)$. Clearly $G_i$ is a supergraph of $G$.
%
%\noindent{\bf Claim:} $G_i$ is an interval graph.
%
%$G_i$ contains 3 maximal cliques, namely $C_1= \{a_i \cup N(a_i)\}$,
%$C_2 = \{b_i \cup N(b_i)\}$ and  $\{C\}$. By construction of $G_i$,
%$C$ is the only separating clique of $G_i$. Now, it is easy to see
%that the clique tree of $G_i$ is a path $C_1, C, C_2$. Hence, $G_i$
%is an interval graph.
%
%Suppose $vc(G)$ has $i$ pair wise non-adjacent vertices. So, by
%removing these $2i$ vertices from $vc$, rest $k - 2i$ vertices will
%form a clique. Let $G'$ be the graph after removing those $2i$
%vertices. Now, it is easy to see that $G'$ is a split graph.
%
%
%
%\end{proof}

\subsection{Tightness result}

In this section we illustrate some graphs for which the bound given
in Theorem \ref{3.3} for boxicity is tight. Consider the graph
$C_4$, a cycle of length four. The size of minimum vertex cover of
$C_4$ is 2. It is easy to verify that the boxicity of $C_4$ is two.
So, $box(C_4) = \frac{|MVC|}{2} + 1$.

Roberts has shown that for any even number $n$, there exists a graph
on $n$ vertices with boxicity $\frac{n}{2}$. Such graphs are called
Roberts graphs. The Roberts graph on $n$ vertices is obtained by
removing the edges of a perfect
matching from the complete graph $K_n$.\\

\noindent {\bf Claim:} For Roberts graph $G$ on $n$ vertices, the
cardinality of minimum vertex cover is $n - 2$.

\begin{proof}
Let $a, b \in V(G)$ be such that $(a, b) \notin E(G)$. It is easy to
verify that $V - \{a, b\}$ is a vertex cover of $G$. Thus, $|MVC|
\leq n - 2$. Now, if possible suppose $|MVC| \leq n - 3$. Let $a$,
$b$, and $c$ be the vertices which are not present in $MVC$. By the
construction of Roberts graph there will exist an edge in the
induced subgraph of $G$ on $\{a, b, c\}$. Clearly this edge is not
adjacent to any of the vertex of $MVC$. This is a contradiction.
Hence for Roberts graph on $n$ vertices $|MVC| = n -2$.
\end{proof}

For Roberts graphs $\left\lfloor\frac{|MVC|}{2}\right\rfloor + 1$ =
$\left\lfloor\frac{n-2}{2}\right\rfloor + 1$ = $\frac{n}{2}$ (as $n$
is even), which equals the boxicity of Roberts graph. Thus the bound
of Theorem \ref{3.3} is tight for Roberts graphs.

\section{Boxicity and bipartite graphs}

Let $G = (V_1\cup V_2, E)$ be a bipartite graph such that $|V_1| =
n_1$ and $|V_2| = n_2$. Suppose $n_1 \leq n_2$ and $n_1 \geq 3$. In
this section we show that for a bipartite graph $G$, $box(G) \leq $
min $\{\lceil\frac{n_1}{2}\rceil, \lceil\frac{n_2}{2}\rceil\}$.

It is easy to see that $|MVC| \leq n_1$ in $G$. So, by Theorem
\ref{3.3}, $box(G) \leq \lfloor\frac{n_1}{2}\rfloor + 1$. If $n_1$
is odd, then $\left\lfloor\frac{n_1}{2}\right\rfloor + 1 =
\left\lceil \frac{n_1}{2} \right\rceil$. So, $box(G) \leq $ min
$\{\left\lceil\frac{n_1}{2}\right\rceil,
\left\lceil\frac{n_2}{2}\right\rceil\}$.

Now assume that $n_1$ is even. By Theorem \ref{3.3}, $box(G) \leq
\left\lfloor\frac{n_1}{2}\right\rfloor + 1 = \frac{n_1}{2} + 1$ (as
$n_1$ is even). But, we need to show that $box(G) \leq
\frac{n_1}{2}$. So that, $box(G) \leq
\min\{\left\lceil\frac{n_1}{2}\right\rceil,
\left\lceil\frac{n_2}{2}\right\rceil\}$.

Suppose $n_1$ is even. We construct $\frac{n_1}{2}$ interval super
graphs of $G$, say $I_1, I_2,\ldots,I_{\frac{n_1}{2}}$, as follows.

\noindent{\bf Construction of $I_i$, for $1 \leq i \leq
\frac{n_1}{2} - 1$:}  Let $x,y \in V_1$ and $V_1' = V_1 - \{x,y\}$.
Note that $V_1' \neq \emptyset$ as $|V_1| \geq 3$. Let $G_1'$ be the
graph induced by $V_1' \cup V_2$ in $G$. Let $G_1$ be a graph such
that $V(G_1) = V(G_1')$ and $E(G_1) = E(G_1') \cup \{xy \ | \ x, y
\in V_1'\}$. Clearly $V_1'$ induces a clique and $V_2$ induces an
independent set in $G_1$. So, $G_1$ is a split graph. Now, by
Theorem \ref{2.3}, $box(G_1) \leq$
min$\{\left\lceil\frac{n_1-2}{2}\right\rceil,
\left\lceil\frac{n_2}{2}\right\rceil\} =
\left\lceil\frac{n_1-2}{2}\right\rceil = \frac{n_1}{2} - 1$ (as
$n_1$ is even). That is, $G_1$ is the intersection of at most
$\frac{n_1}{2} - 1$ interval graphs, say $I'_1, I'_2, \ldots,
I'_{\frac{n_1}{2} - 1}$, by Theorem \ref{1}.

Let $h_i$ be a function that maps each $v \in V(I'_i)$, $1 \leq i
\leq \frac{n_1}{2} - 1$, to a closed interval on the real line such
that $I'_i$ is the intersection graph of the family of intervals
$\{h_i(v) : v \in V(I'_i)\}$. Now, let $L'_i$ and $R'_i$, $1 \leq i
\leq \frac{n_1}{2} - 1$, be numbers on the real line such that $L'_i
\leq x$, for all $x \in \bigcup_{v \in V(I'_i)}(h_i(v))$ and $R'_i
\geq y$, for all $y \in \bigcup_{v \in V(I'_i)}(h_i(v))$. To
construct $I_i$, $1 \leq i \leq \frac{n_1}{2} - 1$, map each $v \in
V(G)$ to a closed interval $f_{i}(v)$ on the real line as follows.

\vspace{-.6cm}

\begin{eqnarray*}
% \nonumber to remove numbering (before each equation)
  f_{i}(v) &=& h_i(v) \ \ \ \ \ \ if \ v \in V(I'_i) = V(G) - \{x,y\}. \\
         &=& [L'_i, R'_i] \ \ \ \ otherwise.
\end{eqnarray*}

\noindent{\bf Claim 1:} For each $I_i$, $1 \leq i \leq \frac{n_1}{2}
- 1$, $E(G) \subseteq E(I_i)$.

\begin{proof}
Since $f_i(x) = f_i(y) = [L'_i, R'_i]$ it is easy to see that in  $I_i$,
$x$ and $y$ are adjacent to each $v$, $v \in
V(I_i) - \{x,y\}$, by the definition of $L'_i$ and $R'_i$. As $I'_1,
I'_2,\ldots,I'_{\frac{n_1}{2}-1}$ is an interval graph
representation of $G_1$, by Theorem \ref{1}, $E(G_1) \subseteq
E(I'_i)$, for each $1 \leq i \leq \frac{n_1}{2} - 1$. But in $I_i$,
$f_i(v) = h_i(v)$, for all $v \in V(I'_i)$. So, $E(G_1) \subseteq
E(I_i)$, $1 \leq i \leq \frac{n_1}{2} - 1$. Hence for each $I_i$, $1
\leq i \leq \frac{n_1}{2} - 1$, $E(G) \subseteq E(I_i)$.
\end{proof}

\noindent{\bf Construction of $I_{\frac{n_1}{2}}$:} Let $V_1 =
\{v_1, v_2, \ldots, v_{n_1}\}$. Suppose without loss of generality that $x = v_1$ and $y = v_{n_1}$.
To construct $I_{\frac{n_1}{2}}$, we map each $v \in V(G)$ to an
interval $f_{\frac{n_1}{2}}(v)$ as follows.

\vspace{-.6cm}

\begin{eqnarray*}
% \nonumber to remove numbering (before each equation)
  f_{\frac{n_1}{2}}(v)  &=& [2i - 1, 2i] \ \ \ \ \ \ \ \ \ if \ v \in V_1 \ and \ v = v_i\\
                        &=& [1, 2n_1] \ \ \ \ \ \ \ \ \ \ \ \ \ \mbox { if $v \in V_2$ and }  \ v \in N_x \cap
                        N_y. \\
                        &=& [1, 2n_1 - 2] \ \ \ \ \ \ \mbox { if $v \in V_2$ and }  \ v \in N_x -
                        N_y. \\
                        &=& [3, 2n_1] \ \ \ \ \ \ \ \ \ \ \ \ \ \ \mbox { if $v \in V_2$ and } \ v \in N_y
                        -
                        N_x. \\
                        &=& [3, 2n_1 - 2] \ \ \ \ \ \ \  if   \ v \in V_2 - (N_x
                        \cup N_y).
\end{eqnarray*}

%Now, for each $v_i \in V_1$, $1 \leq i \leq n_1$, assign an interval
%$[l(v_i), r(v_i)]$ as $l(v_i) = 2i - 1$ and $r(v_i) = 2i$. For each
%$v \in V_2$, if $v$ is adjacent to both $x$ and $y$, then $l(v) = 1$
%and $r(v) = 2n_1$. If $v$ is adjacent to $x$ but not adjacent to $y$
%then $l(v) = 1$ and $r(v) = 2(n_1-2)$. If $v$ is adjacent to $y$ but
%not adjacent to $x$ then $l(v) = 3$ and $r(v) = 2n_1$. If $v$ is not
%adjacent to $x$ and $y$ then $l(v) = 3$ and $r(v) = 2(n_1-2)$.\\

\noindent{\bf Claim 2:} $E(G) \subseteq E(I_{\frac{n_1}{2}})$.

\begin{proof}
In $I_{\frac{n_1}{2}}$, for each $v \in V_2$, the point $n_1 \in
f_{\frac{n_1}{2}}(v)$. So, $V_2$ induces a clique in
$I_{\frac{n_1}{2}}$. Also for each $v \in N_G(x)$, $1 \in
f_{\frac{n_1}{2}}(v)$ and for each $v \in N_G(y)$, $2n_1 \in
f_{\frac{n_1}{2}}(v)$. So, $N_G(x) \subseteq
N_{I_{\frac{n_1}{2}}}(x)$ and $N_G(y) \subseteq
N_{I_{\frac{n_1}{2}}}(y)$. For $v_j \in V_1 - \{x,y\}$, we have $2
\leq j \leq n_1-1$, and thus we have $3 \leq 2j-1 \leq 2n_1-2$. So,
$2j-1 \in f_{\frac{n_1}{2}}(v)$, for all $v \in V_2$. It is easy to
see that 
$(v_i,v) \in E(I_{\frac{n_1}{2}})$ for all pairs $(v_i,v)$ where 
$v_i \in V_1 - \{x, y\}$ and  
 $v \in V_2$. Hence $E(G) \subseteq
E(I_{\frac{n_1}{2}})$.
\end{proof}

\noindent The following lemma follows from Claim 1 and Claim 2.

\begin{lemma}
For each interval graph $I_i$, $1 \leq i \leq \frac{n_1}{2}$, $E(G)
\subseteq E(I_i)$.
\end{lemma}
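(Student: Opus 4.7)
The statement to be proved is essentially a bookkeeping step: it asserts that every edge of $G$ is preserved as an edge in each of the $\frac{n_1}{2}$ interval supergraphs $I_1,\ldots,I_{n_1/2}$ that were just constructed. Since the construction was split into two cases (the ``split graph plus stretched interval'' construction for $1 \le i \le \frac{n_1}{2}-1$, and the explicit interval assignment for $i = \frac{n_1}{2}$), and each case already has its own claim establishing edge preservation, the lemma is effectively a concatenation of those two claims.

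My plan is therefore extremely short: observe that the index range $\{1,2,\ldots,\frac{n_1}{2}\}$ partitions into $\{1,\ldots,\frac{n_1}{2}-1\}$ and the singleton $\{\frac{n_1}{2}\}$. Claim 1 gives $E(G) \subseteq E(I_i)$ for every $i$ in the first part of the range, and Claim 2 gives $E(G) \subseteq E(I_{n_1/2})$. So I would simply write one sentence for each case and conclude.

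There is no real obstacle here; the content of the proof is already in Claims 1 and 2. The only tiny thing worth checking is that the case $\frac{n_1}{2}-1 = 0$ (i.e.\ $n_1 = 2$) does not create a vacuity problem, but we are in the regime $n_1 \ge 3$ with $n_1$ even, so $n_1 \ge 4$ and the first range is non-empty; in any event, even if it were empty, Claim 2 alone would suffice to cover the only surviving index. So the proof is just: ``By Claim 1, $E(G) \subseteq E(I_i)$ for each $i$ with $1 \le i \le \frac{n_1}{2}-1$. By Claim 2, $E(G) \subseteq E(I_{n_1/2})$. Hence $E(G) \subseteq E(I_i)$ for each $i$ with $1 \le i \le \frac{n_1}{2}$, as required.''
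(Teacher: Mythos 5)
Your proposal is correct and matches the paper exactly: the paper states that this lemma ``follows from Claim 1 and Claim 2,'' which is precisely your one-line concatenation of the two index ranges. Your extra remark that $n_1 \ge 4$ keeps the first range non-empty is a harmless bonus not present in the paper.
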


\begin{lemma}
For any $(p, q) \notin E(G)$, there exists some $i$, $1 \leq i \leq
\frac{n_1}{2}$, such that $(p, q) \notin E(I_i)$.
\end{lemma}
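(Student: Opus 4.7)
The plan is to do a case analysis based on where the non-edge $(p,q)$ sits in the bipartition $V_1 \cup V_2$, and for each case either point to one of the ``split-graph'' interval graphs $I_1,\ldots,I_{\frac{n_1}{2}-1}$ or to the special interval graph $I_{\frac{n_1}{2}}$.

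First I would note that since $G$ is bipartite, every non-edge falls into exactly one of three types: (A) both endpoints in $V_1$, (B) both in $V_2$, or (C) one in $V_1$ and the other in $V_2$. For type (A), I would appeal entirely to $I_{\frac{n_1}{2}}$: there each $v_i \in V_1$ is sent to the pairwise disjoint interval $[2i-1,2i]$ (with $x=v_1$ mapped to $[1,2]$ and $y=v_{n_1}$ mapped to $[2n_1-1,2n_1]$), so any two distinct vertices of $V_1$ receive disjoint intervals and are thus non-adjacent in $I_{\frac{n_1}{2}}$.

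Next, for types (B) and the ``easy'' part of (C) (where both endpoints live in $V(G_1) = V(G)-\{x,y\}$), I would exploit the split graph $G_1$. The crucial observation is that in passing from $G$ to $G_1$ we only added edges inside $V_1'$, so every non-edge of $G$ whose endpoints both lie in $V(G_1) = V_1' \cup V_2$ and which is not inside $V_1'$ remains a non-edge of $G_1$. In case (B) both endpoints are in $V_2$ (an independent set in $G_1$), and in the subcase of (C) where $p \in V_1'$, $q \in V_2$, the pair $(p,q)\notin E(G)$ is again a non-edge of $G_1$. Since $I'_1,\ldots,I'_{\frac{n_1}{2}-1}$ is an interval representation of $G_1$, there exists $k$ with $(p,q)\notin E(I'_k)$; and since $f_k$ restricted to $V(G_1)$ agrees with $h_k$, this gives $(p,q)\notin E(I_k)$.

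The only remaining sub-case is type (C) with one endpoint being $x$ or $y$; here I would again use $I_{\frac{n_1}{2}}$. Suppose $p=x$ and $q\in V_2$ with $(x,q)\notin E(G)$. Then $q\notin N_G(x)$, so by the definition of $f_{\frac{n_1}{2}}$, $q$ is assigned either $[3,2n_1]$ or $[3,2n_1-2]$; in either case the interval lies strictly to the right of $f_{\frac{n_1}{2}}(x)=[1,2]$, so they are disjoint. The case $p=y$ is symmetric: $q\notin N_G(y)$ forces $f_{\frac{n_1}{2}}(q)\subseteq [1,2n_1-2]$, disjoint from $f_{\frac{n_1}{2}}(y)=[2n_1-1,2n_1]$. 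The main obstacle is simply keeping the case split organized and verifying the interval endpoints in $I_{\frac{n_1}{2}}$ behave correctly; once the right splitting graph ($I_{\frac{n_1}{2}}$ versus the $I_i$ coming from $G_1$) is matched to each case, the verifications are immediate.
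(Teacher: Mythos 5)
Your proof is correct and follows essentially the same route as the paper: non-edges within $V_1$ are killed in $I_{\frac{n_1}{2}}$ by the pairwise disjoint intervals $[2i-1,2i]$, non-edges with both endpoints in $V_1'\cup V_2$ (not both in $V_1'$) survive into the split graph $G_1$ and are killed in some $I'_k$ (hence in $I_k$), and non-edges from $x$ or $y$ to $V_2$ are killed in $I_{\frac{n_1}{2}}$ by the endpoint placement. Your explicit remark that passing from $G$ to $G_1$ only adds edges inside $V_1'$, so such non-edges persist in $G_1$, is in fact a cleaner justification than the paper's terse ``in view of Case 1.''
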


\begin{proof}
Suppose $(p, q) \notin E(G)$.

\noindent{\bf Case 1:} $\{p, q\} \subseteq V_1$.

If both $p$ and $q$ belong to $V_1$, then suppose $p = v_i$ and $q =
v_j$, where $i \neq j$. In this case it is easy to see that in
$I_{\frac{n_1}{2}}$, $f_{\frac{n_1}{2}}(p) \cap f_{\frac{n_1}{2}}(q)
= \emptyset$. So, $p$ is non-adjacent to $q$ in $I_{\frac{n_1}{2}}$.

\noindent{\bf Case 2:} $\{p, q\} \subseteq V_2 \cup V_1'$.

If both $p$ and $q$ belong to $V_2 \cup V'_1$, then it is easy to see that $p,
q \in G_1$ and in view of case 1, $(p,q) \notin E(G_1)$.
 As $I'_1, I'_2,\ldots, I'_{\frac{n_1}{2} - 1}$ is an
interval graph representation of $G_1$, by Theorem \ref{1}, $(p, q)
\notin E(I'_i)$, for some $i$, $1 \leq i \leq \frac{n_1}{2} - 1$,
say $I'_k$. Recalling that in $I_k$, 
 $f_{k}(v) = g_k(v)$ for all $v \in V(I'_k)$
$p$ and $q$ are non-adjacent in $I_k$ also.

\noindent{\bf Case 3:} $p \in \{x,y\}$ and $q \in V_2$.

Let $p = x$. Now, in $I_{\frac{n_1}{2}}$, $f_{\frac{n_1}{2}}(x) =
[1, 2]$ and as $q$ is not a neighbor of $x$ in $G$, either
$f_{\frac{n_1}{2}}(q) = [3, 2n_1-2]$ or $[3, 2n_1]$. In both the
cases, $f_{\frac{n_1}{2}}(p) \cap f_{\frac{n_1}{2}}(q) = \emptyset$.
So, $p$ and $q$ are non-adjacent in $I_{\frac{n_1}{2}}$.

Similarly, if $p = y$, then in in $I_{\frac{n_1}{2}}$,
$f_{\frac{n_1}{2}}(y) = [2n_1-1, 2n_1]$ and as $q$ is not a
neighbor of $y$ in $G$, either $f_{\frac{n_1}{2}}(q) = [1,
2n_1-2]$ or $[3, 2n_1-2]$. In both the cases, $f_{\frac{n_1}{2}}(p)
\cap f_{\frac{n_1}{2}}(q) = \emptyset$. So, $p$ and $q$ are
non-adjacent in $I_{\frac{n_1}{2}}$.
\end{proof}

%Hence there exists some $i$, $1 \leq i \leq \frac{n_1}{2}$, such
%that $(p, q) \notin E(I_i)$.

By combining the above two lemmas we get $E(G) = E(I_1) \cap E(I_2)
\cap \cdots \cap E(I_{\frac{n_1}{2}})$. Thus by Theorem \ref{1}, we
have the following.

\begin{theo}\label{4.3}
For a bipartite graph $G= (V_1 \cup V_2, E)$, $box(G) \leq$ min
$\{\left\lceil \frac{|n_1|}{2}\right\rceil, \left\lceil
\frac{|n_2|}{2}\right\rceil\}$, where $|V_1| = n_1$ and $V_2 = n_2$.
\end{theo}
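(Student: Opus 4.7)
My plan is first to handle the easy case by symmetry. Assuming without loss of generality $n_1 \leq n_2$, it suffices to show $box(G) \leq \lceil n_1/2 \rceil$. Because $V_2$ is an independent set, $V_1$ itself is a vertex cover, so $|MVC| \leq n_1$. When $n_1$ is odd, Theorem \ref{3.3} immediately yields $box(G) \leq \lfloor n_1/2 \rfloor + 1 = \lceil n_1/2 \rceil$, and I would dispose of that case in one line. The nontrivial case is when $n_1$ is even: Theorem \ref{3.3} only gives $n_1/2 + 1$, whereas we need $n_1/2$, so we must save one dimension.

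For the even case, my plan is to isolate two distinguished vertices $x, y \in V_1$ and treat the rest as a split graph. Define $G_1$ on $V_1' \cup V_2$ with $V_1' = V_1 \setminus \{x,y\}$ by turning $V_1'$ into a clique. Since $V_1'$ is a clique and $V_2$ is an independent set in $G_1$, Theorem \ref{2.3} applies and gives $box(G_1) \leq \lceil (n_1-2)/2 \rceil = n_1/2 - 1$. Let $I'_1,\dots,I'_{n_1/2 - 1}$ be the corresponding interval graphs. I then extend each $I'_i$ to an interval graph $I_i$ on $V(G)$ by assigning both $x$ and $y$ the full enclosing interval $[L'_i, R'_i]$; this makes $x$ and $y$ universal in each $I_i$, so every edge of $G$ is preserved, and any non-edge of $G$ not incident to $\{x, y\}$ is a non-edge of $G_1$ and is therefore separated by some $I'_i$, hence by the corresponding $I_i$.

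The final ingredient is a single additional interval graph $I_{n_1/2}$ designed to take care of all non-edges involving $x$ or $y$, as well as the non-edges among pairs inside $V_1$. Labeling $V_1 = \{v_1,\dots,v_{n_1}\}$ with $x = v_1$ and $y = v_{n_1}$, I would place $v_i$ at the unit interval $[2i-1, 2i]$ so that any two $V_1$-vertices are automatically separated, and then assign each $v \in V_2$ an interval whose left endpoint is $1$ or $3$ according to whether $v \in N_G(x)$, and whose right endpoint is $2n_1$ or $2n_1 - 2$ according to whether $v \in N_G(y)$. The critical observation is that the middle vertices $v_2,\dots,v_{n_1 - 1}$ lie in $[3, 2n_1 - 2]$, which is contained in every interval assigned to a $V_2$-vertex, so all required bipartite edges between $V_1 \setminus \{x,y\}$ and $V_2$ are preserved; meanwhile $[1,2]$ misses the intervals of the $V_2$-vertices not adjacent to $x$, and symmetrically for $y$, so the required non-edges at $x, y$ are separated in $I_{n_1/2}$.

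The main obstacle I anticipate is the case analysis ensuring that $I_{n_1/2}$ simultaneously separates every non-edge touching $x$ or $y$ (including the non-edge $xy$ itself, and every $xy'$ with $y' \in V_1 \setminus \{x\}$) while still containing every edge of $G$, especially the edges from $V_1 \setminus \{x,y\}$ to $V_2$; this is where the explicit endpoint arithmetic must be checked for each of the four types of $V_2$-intervals. Once Claims 1 and 2 (edges preserved by all $I_i$) and the case-by-case non-edge separation lemma are in place, Theorem \ref{1} gives $box(G) \leq n_1/2$, completing the even case, and combining with the odd case yields $box(G) \leq \min\{\lceil n_1/2 \rceil, \lceil n_2/2 \rceil\}$.
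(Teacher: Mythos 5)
Your proposal is correct and follows essentially the same route as the paper: dispose of the odd case via Theorem \ref{3.3}, and in the even case remove two vertices $x,y\in V_1$, complete $V_1'=V_1\setminus\{x,y\}$ into a clique to obtain a split graph covered by Theorem \ref{2.3} using $\frac{n_1}{2}-1$ interval graphs (with $x,y$ made universal in each), and add one last interval graph placing $v_i$ at $[2i-1,2i]$ to separate all pairs inside $V_1$ and all non-edges at $x$ and $y$. The only quibble is your claim that every non-edge of $G$ avoiding $\{x,y\}$ is a non-edge of $G_1$ --- pairs inside $V_1'$ become edges of $G_1$ --- but since your $I_{n_1/2}$ already separates every pair of $V_1$-vertices, the argument is complete as stated.
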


{
\subsection{Tightness result}
\label {chromatic_section}
}

In this section we show that the bound given in Theorem \ref{4.3} is
tight. Consider a complete bipartite graph $G = (V_1 \cup V_2, E)$ 
where $|V(G)| = n$ and $|V_1|=|V_2|= \frac {n}{2}$.
Now remove a perfect matching $M$  from that. Let $G' = (V_1 \cup V_2,
E')$ be the resulting graph.  We show that
$box(G') = \lceil\frac{n}{4}\rceil$.

\vspace{.2cm}

\noindent{\bf Claim:} $box(G') \geq
\left\lceil\frac{n}{4}\right\rceil$.

\begin{proof}
If possible suppose $box(G') \leq \left\lceil\frac{n}{4}\right\rceil
- 1$. Let $n$ be divisible by $4$. So,
$\left\lceil\frac{n}{4}\right\rceil - 1 = \frac{n}{4} - 1$, By
Theorem \ref{1}, $G'$ is the intersection of at most $\frac{n}{4} -
1$ interval graphs, sat $I_1,I_2,\ldots,I_{\frac {n}{4} -1}$.
Recall that $M \not \subseteq E(G')$ and thus for each $e \in M$
there exists a $k$, $1 \le k \le \frac {n}{4} -1$, 
such that $e \notin E(I_k)$, since $G'$ is the
intersection of the  interval graphs $I_1, I_2, \ldots, I_{\frac {n}{4} -1}$.
Since $|M| = \frac {n}{2}$, by pigeon hole principle  we infer that there exists a $j$, 
$1 \le j \le \frac {n}{4} - 1$, such that 
  at least three edges of $M$ are missing in $E(I_j)$.
Now, it is easy to see that $I_k$ contains an induced cycle of
length six, a contradiction.

Next suppose $n$ is not divisible by $4$. Now,
$\left\lceil\frac{n}{4}\right\rceil - 1 =
\left\lfloor\frac{n}{4}\right\rfloor$. Using similar arguments as 
when $n$ is divisible by $4$, we will get a contradiction
in this case also.

Hence, $box(G') \geq \left\lceil\frac{n}{4}\right\rceil$.
\end{proof}

By Theorem \ref{4.3}, we have $box(G') \leq
\lceil\frac{n}{4}\rceil$. Hence $box(G') = \lceil\frac{n}{4}\rceil$.
It follows that the bound of Theorem \ref {4.3} is tight.

\section{Boxicity and chromatic number}

%In this section, we show that when boxicity of a graph is near to
%$\frac{n}{2}$, then chromatic number become very high, i.e., near to
%$\frac{n}{2}$. Let $\chi$ and $\alpha$ be the chromatic number and
%independence number of a graph $G$.

We know that $box(G) \leq \left\lfloor\frac{n}{2}\right\rfloor$,
where $n$ is the number of vertices of $G$ \cite{roberts69}. Let
$box(G) = \frac{n}{2} - s$, for some $s \geq 0$. Note that, if $n$
is odd, then $s$ is not an integer. In the following theorem, we
show that when $s$ is small for a graph $G$, the chromatic number of
$G$ has to be very high.

\begin{theo}\label{5.1}
If $box(G) = \frac{n}{2} - s$, then $\chi(G) \geq \frac{n}{2s+2}$.
\end{theo}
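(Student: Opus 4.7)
The plan is to combine Theorem \ref{3.3} (the vertex-cover upper bound on $box(G)$) with the elementary pigeonhole observation relating chromatic number and independence number. The chain of implications runs: a small value of $s$ forces $box(G)$ close to $n/2$, which by Theorem \ref{3.3} forces the minimum vertex cover to be large, which forces the independence number to be small, which in turn forces $\chi(G)$ to be large.

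Concretely, I would first fix a proper coloring of $G$ using $k = \chi(G)$ colors, producing color classes $V_1, \ldots, V_k$ with $\sum_i |V_i| = n$. By pigeonhole some class has size at least $n/k$, so the independence number satisfies $\alpha(G) \geq n/k$.

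Next, as noted in the introduction, the complement of a minimum vertex cover is a maximum independent set, so $t := |MVC| = n - \alpha(G) \leq n - n/k = n(k-1)/k$. Plugging this into Theorem \ref{3.3} yields
\[
box(G) \leq \left\lfloor \frac{t}{2} \right\rfloor + 1 \leq \frac{t}{2} + 1 \leq \frac{n(k-1)}{2k} + 1 = \frac{n}{2} - \frac{n}{2k} + 1.
\]
Substituting $box(G) = \frac{n}{2} - s$ and rearranging gives $s \geq \frac{n}{2k} - 1$, equivalently $k \geq \frac{n}{2s+2}$, which is the claimed lower bound on $\chi(G)$.

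There is essentially no technical obstacle here; the whole argument is a short chain of inequalities, and everything follows at once from Theorem \ref{3.3} plus the trivial bound $\alpha(G) \geq n/\chi(G)$. The only point needing a moment of care is correctly reversing the direction of the inequality when passing from an upper bound on $box(G)$ to a lower bound on $\chi(G)$.
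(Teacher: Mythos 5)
Your proof is correct and is essentially the same as the paper's: both combine Theorem \ref{3.3} with the facts $\alpha(G) = n - t$ and $\chi(G) \geq n/\alpha(G)$, merely running the chain of inequalities in the opposite order (you bound $t$ from $\chi$ and then rearrange, while the paper bounds $t$ from $box(G)$ first). No substantive difference.
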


\begin{proof}
Let $box(G) = \frac{n}{2} - s$. By Theorem \ref{3.3}, $box(G) =
\frac{n}{2} - s \leq \left\lfloor\frac{t}{2}\right\rfloor + 1 \leq
\frac{t}{2} + 1$, where $t$ is the cardinality of a minimum vertex
cover of $G$. So, $t \geq n - 2s - 2$. It is easy to see that if
$\alpha$ is the independence number of $G$, then $\chi \geq
\frac{n}{\alpha}$. But $\alpha = n - t$. So,

\vspace{-.6cm}

\begin{eqnarray*}
  \chi(G) &\geq& \frac{n}{n - t} \\
  ~ &\geq& \frac{n}{n-(n-2s-2)} \\
  ~ &=& \frac{n}{2s+2}
\end{eqnarray*}
%So, $\chi(G) \geq \frac{n}{2s+2}$.

\end{proof}

\noindent{\bf Remark:} The lower bound for $\chi(G)$ given in
Theorem \ref{5.1} is tight in the case of Roberts graphs. We know
that if $G$ is a Roberts graph on $n$ vertices, then $box(G) =
\frac{n}{2}$ (recall that $n$ is even for Roberts graph). Thus, $s =
0$ for $G$. Putting the value of $s$ in the inequality given in
Theorem \ref{5.1}, we get $\chi(G) \geq \frac{n}{2}$. But it is easy
to verify that $\chi(G) = \frac{n}{2}$.

%\bibliographystyle{plain}
%\bibliography{references}

\end{document}